\DeclareFontFamily{OT2}{cmr}{\hyphenchar\font45 }
\DeclareFontShape{OT2}{cmr}{m}{l}{%
<5><6><7><8><9>gen*wncyr%
<10><10.95><12><14.4><17.28><20.74><24.88>wncyr10}{}
\DeclareMathAlphabet{\mathcyr}{OT2}{cmr}{m}{l}
\DeclareMathAlphabet{\mathcyb}{OT2}{cmr}{b}{l}
\SetMathAlphabet{\mathcyr}{bold}{OT2}{cmr}{b}{l}
\newtheorem{thm}{Theorem}[section]
\newtheorem{lem}[thm]{Lemma}
\newtheorem{conj}[thm]{Conjecture}
\theoremstyle{definition}
\newtheorem{defn}[thm]{Definition}
\theoremstyle{remark}
\newtheorem{rem}[thm]{Remark}
\newcommand{\sha}{\mathbin{\widetilde{\mathcyr{sh}}}}
\begin{document}

\title[Linear relations of Ohno sums of MZVs]{Linear relations of Ohno sums of multiple zeta values}

\author{Minoru Hirose}
\address[Minoru Hirose]{Faculty of Mathematics, Kyushu University
 744, Motooka, Nishi-ku, Fukuoka, 819-0395, Japan}
\email{m-hirose@math.kyushu-u.ac.jp}

\author{Hideki Murahara}
\address[Hideki Murahara]{Nakamura Gakuen University Graduate School,
 5-7-1, Befu, Jonan-ku, Fukuoka, 814-0198, Japan} 
\email{hmurahara@nakamura-u.ac.jp}

\author{Tomokazu Onozuka}
\address[Tomokazu Onozuka]{Multiple Zeta Research Center, Kyushu University 744, Motooka, Nishi-ku,
Fukuoka, 819-0395, Japan}
\email{t-onozuka@math.kyushu-u.ac.jp}

\author{Nobuo Sato}
\address[Nobuo Sato]{Faculty of Mathematics, Kyushu University
 744, Motooka, Nishi-ku, Fukuoka, 819-0395, Japan}
\email{n-sato@math.kyushu-u.ac.jp}

\keywords{Multiple zeta values, Ohno's relation, Ohno sum}
\subjclass[2010]{Primary 11M32; Secondary 05A19}

\begin{abstract}
 Ohno's relation is a well-known family of relations among multiple zeta values, which can naturally be regarded as a type of duality for a certain power series which we call an Ohno sum. 
 In this paper, we investigate $\mathbb{Q}$-linear relations among Ohno sums which are not contained in Ohno's relation. 
 We prove two new families of such relations, and pose several further conjectural families of such relations. 
\end{abstract}

\maketitle

%%%%%%%%%%%%%%%%%%%%%%%%%%%%%%%%%%%%%%%%%%%%%%%%%%%%%%%%%%%%%%%%%%%%%%%%%%%%%%%%%%%%%%%%%%%%%%%%%%%%
\section{Introduction}
The multiple zeta values (MZVs) are defined by the convergent series
\begin{align*}
 \zeta(k_1,\dots, k_r)
 :=\sum_{1\le m_1<\cdots <m_r} \frac{1}{m_1^{k_1}\cdots m_r^{k_r}} \in \mathbb{R}, 
\end{align*} 
where $(k_1,\dots, k_r)$ is an admissible index (sequence of positive integers whose last component is greater than $1$). 
For an index $\boldsymbol{k}=(k_1,\dots,k_r)$, we call $\left|\boldsymbol{k}\right|=k_1+\cdots+k_r$ its weight and $r$ its depth. 
\begin{defn}[Dual index]
For an admissible index 
 \[
  \boldsymbol{k}=(\underbrace{1,\ldots,1}_{a_1-1},b_1+1,\dots,\underbrace{1,\ldots,1}_{a_l-1},b_l+1) \quad (a_p, b_q\ge1),
 \]
we define the dual index of $\boldsymbol{k}$ by 
 \[
  \boldsymbol{k}^\dagger :=(\underbrace{1,\ldots,1}_{b_l-1},a_l+1,\dots,\underbrace{1,\ldots,1}_{b_1-1},a_1+1).
 \]
\end{defn}
For two indices $\boldsymbol{k}$ and $\boldsymbol{e}$ of the same depth, we denote by $\boldsymbol{k} \oplus \boldsymbol{e}$ the index obtained by componentwise addition. 
\begin{thm}[Ohno's relation; Ohno \cite{Oho99}] \label{ohno}
 For an admissible index $\boldsymbol{k}$ and a non-negative integer $m$, we have
 \begin{align*}
  \sum_{\substack{ |\boldsymbol{e}|=m \\ \boldsymbol{e}\in\mathbb{Z}_{\ge0}^{r} }}
  \zeta (\boldsymbol{k}\oplus\boldsymbol{e}) 
  =\sum_{\substack{ |\boldsymbol{e}'|=m \\ \boldsymbol{e}'\in\mathbb{Z}_{\ge0}^{r'} }}
  \zeta (\boldsymbol{k}^\dagger\oplus\boldsymbol{e}'), 
 \end{align*}
 where $r$ and $r'$ are the depths of indices $\boldsymbol{k}$ and $\boldsymbol{k}^\dagger$, respectively.
\end{thm}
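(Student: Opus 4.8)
The plan is to encode the entire family of Ohno relations (ranging over all $m$) into a single one-parameter generating function and to prove that this generating function is invariant under the duality $\boldsymbol{k}\mapsto\boldsymbol{k}^\dagger$, thereby recovering the relation coefficient by coefficient in the parameter. Concretely, for $|x|<1$ I would set
\[
 \Phi_{\boldsymbol{k}}(x):=\sum_{m\ge0}x^m\sum_{\substack{|\boldsymbol{e}|=m\\ \boldsymbol{e}\in\mathbb{Z}_{\ge0}^{r}}}\zeta(\boldsymbol{k}\oplus\boldsymbol{e}).
\]
Since $\boldsymbol{k}\oplus\boldsymbol{e}$ is again admissible and comparing the coefficients of $x^m$ recovers exactly the two sides of the theorem, it suffices to prove $\Phi_{\boldsymbol{k}}(x)=\Phi_{\boldsymbol{k}^\dagger}(x)$. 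First I would interchange the two summations (legitimate for $|x|<1$ because $m_i\ge i\ge1$) and sum the resulting geometric series in each coordinate $e_i$, obtaining the closed form
\[
 \Phi_{\boldsymbol{k}}(x)=\sum_{0<m_1<\cdots<m_r}\ \prod_{i=1}^{r}\frac{1}{m_i^{\,k_i-1}(m_i-x)}.
\]
Thus the whole problem reduces to the symmetry of this parametrized multiple series under $\boldsymbol{k}\mapsto\boldsymbol{k}^\dagger$.

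Next I would recall the mechanism behind the $m=0$ case, which is precisely the classical duality $\zeta(\boldsymbol{k})=\zeta(\boldsymbol{k}^\dagger)$. Writing $n=|\boldsymbol{k}|$ and encoding the admissible index as the binary word $0^{k_1-1}1\cdots0^{k_r-1}1$, one has the iterated integral
\[
 \zeta(\boldsymbol{k})=\int_{0<t_1<\cdots<t_n<1}\ \prod_{i=1}^{n}\omega_{a_i}(t_i),\qquad \omega_0=\frac{dt}{t},\ \ \omega_1=\frac{dt}{1-t},
\]
and the substitution $t_i\mapsto1-t_{n+1-i}$ reverses the word and interchanges $\omega_0\leftrightarrow\omega_1$; since reversing-and-swapping the word of $\boldsymbol{k}$ yields exactly the word of $\boldsymbol{k}^\dagger$, duality follows. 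My goal is to upgrade this involution argument from the value $\zeta(\boldsymbol{k})$ to the full generating function $\Phi_{\boldsymbol{k}}(x)$.

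To this end I would seek an $x$-deformation of the two differential forms, say $\omega_0^{(x)},\omega_1^{(x)}$ on $(0,1)$, such that the iterated integral of $\omega_{a_1}^{(x)}\cdots\omega_{a_n}^{(x)}$ over the simplex expands exactly to the closed form above: the net effect must be to replace, in each block $0^{k_i-1}1$, precisely one factor $1/m_i$ by $1/(m_i-x)$. A direct expansion shows that inserting a twist $t^{-x}$ at each letter $1$ does produce a factor $1/(m_i-x)$, but the parameter then leaks through the subsequent $dt/t$ factors; the deformation must therefore be arranged (by compensating twists $t^{\pm x}$, or equivalently by a single global change of variables) so that exactly one $1/(m_i-x)$ survives per block. \textbf{The main obstacle is precisely this packaging:} the forms must be deformed so that the resulting integral both reproduces $\Phi_{\boldsymbol{k}}(x)$ \emph{and} stays compatible with the involution $t\mapsto1-t$, i.e.\ so that $t_i\mapsto1-t_{n+1-i}$ still swaps $\omega_0^{(x)}\leftrightarrow\omega_1^{(x)}$ up to the reindexing. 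Once such a self-dual family of forms is in place, the same reversal-and-swap of words that gave ordinary duality yields $\Phi_{\boldsymbol{k}}(x)=\Phi_{\boldsymbol{k}^\dagger}(x)$, completing the proof. An alternative route that sidesteps pinning down the forms is the connected-sum method: introduce a parametrized connector $C_x(m,n)$ linking the deepest summation variables of two indices, establish transport relations that move one box from one index to the other while adjusting the connector, and thereby reduce both $\Phi_{\boldsymbol{k}}$ and $\Phi_{\boldsymbol{k}^\dagger}$ to one and the same symmetric connected sum; there the analogous obstacle is to guess the correct $x$-dependent connector and verify its transport identities.
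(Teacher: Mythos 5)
The paper does not prove this statement at all: Theorem \ref{ohno} is quoted from Ohno \cite{Oho99} and used as an input, so there is no internal proof to compare yours against; your attempt has to stand on its own. Its first half does stand: the generating function $\Phi_{\boldsymbol{k}}(x)$ is exactly the paper's $\mathcal{O}(\boldsymbol{k})$, the closed form $\sum_{0<m_1<\cdots<m_r}\prod_{i}m_i^{-(k_i-1)}(m_i-x)^{-1}$ is a correct summation of the geometric series, and reducing the theorem to $\Phi_{\boldsymbol{k}}(x)=\Phi_{\boldsymbol{k}^\dagger}(x)$ is precisely the reformulation (\ref{ohno_eq}). That part is bookkeeping, not progress.

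The genuine gap is the step you yourself flag as ``the main obstacle'': you never produce the $x$-deformed forms $\omega_0^{(x)},\omega_1^{(x)}$, nor verify that any candidate is simultaneously (a) correct, i.e.\ its iterated integral expands to $\Phi_{\boldsymbol{k}}(x)$ with exactly one factor $1/(m_i-x)$ per block, and (b) self-dual under $t\mapsto 1-t$ up to the swap $\omega_0^{(x)}\leftrightarrow\omega_1^{(x)}$. As you note, the naive twist $t^{-x}$ at each letter $1$ fails because the exponent propagates through the subsequent $dt/t$ integrations, and repairing this while preserving the reflection symmetry is where the entire content of Ohno's relation sits --- the $m=0$ case (classical duality) is easy for exactly the reason that no such deformation is needed. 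The same criticism applies to your fallback: naming the connected-sum method without exhibiting the connector $C_x(m,n)$ and checking its transport relations is a citation of a strategy, not a proof. Both routes can in fact be completed (Ohno's original argument proceeds by a different induction, and Seki--Yamamoto's connected sum for Ohno's relation uses an explicit connector built from $\Gamma$-factors with the parameter $x$ inserted), so your plan is not wrong in direction; but as written the proposal stops exactly at the point where the theorem begins, and therefore does not constitute a proof.
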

Hereafter, we omit such conditions as $\boldsymbol{e}\in\mathbb{Z}_{\ge0}^{r}$ in the summation of the same type as above if there is no risk of confusion. 
Motivated by Ohno's relation, we introduce Ohno sums as follows:
\begin{defn}[Ohno sum]
 For an admissible index $\boldsymbol{k}$ and a non-negative integer $m$, 
 we define $\mathcal{O}_m(\boldsymbol{k})$, $\mathcal{O}(\boldsymbol{k})$, respectively by
 \begin{align*}
  \mathcal{O}_m(\boldsymbol{k})
  &:=\sum_{|\boldsymbol{e}|=m}
  \zeta (\boldsymbol{k}\oplus\boldsymbol{e}) \in\mathbb{R}, \\
  \mathcal{O}(\boldsymbol{k})
  &:=\sum_{m=0}^\infty \mathcal{O}_m(\boldsymbol{k})X^m \in\mathbb{R}[[X]].
 \end{align*}
\end{defn}
By using $\mathcal{O}(\boldsymbol{k})$, we can rewrite Ohno's relation as 
\begin{align} \label{ohno_eq}
 \mathcal{O}(\boldsymbol{k})=\mathcal{O}(\boldsymbol{k}^\dagger).
\end{align}
One may naturally ask whether (\ref{ohno_eq}) exhausts the relations among Ohno sums or not. 
The answer is negative in general. 
In fact, in weights 6 and 7, we have new relations
\begin{align} 
 &\mathcal{O}(1,2,3)+\mathcal{O}(1,3,2)+\mathcal{O}(3,1,2)
 -\mathcal{O}(2,4)-3\mathcal{O}(3,3)=0, \label{eq1} \\
 &\mathcal{O}(1,2,4)+\mathcal{O}(1,4,2)+\mathcal{O}(4,1,2)
 -\mathcal{O}(2,5)-2 \mathcal{O}(3,4)-2 \mathcal{O}(4,3)=0, \label{eq2} \\
 &\mathcal{O}(2,3,2)+\mathcal{O}(1,4,2)+\mathcal{O}(1,3,3)
 -\mathcal{O}(3,1,3)-\mathcal{O}(2,2,3)-\mathcal{O}(2,1,4)=0. \label{eq3} 
\end{align}
In higher weights, we have further new relations among $\mathcal{O}(\boldsymbol{k})$'s (see Table 1).
%Computations are performed by Sage. 
%
\vspace{2ex}
\begin{table}[!h]
 \begin{center}
  \caption{The dimensions of $\mathbb{Q}$-linear relations among $\mathcal{O}(\boldsymbol{k})$'s}
  \begin{tabular}{|c|r|r|r|r|r|r|r|r|r|r|r|r|r|} 
   \hline
   Weight of $\boldsymbol{k}$ & 2& 3& 4& 5& 6& 7& 8& 9& 10& 11& 12 & 13 \\ 
   \hline
   Relations spanned by (\ref{ohno_eq}) & 0 & 1& 1& 4& 6& 16& 28& 64& 120 & 256 & 496 & 1024 \\  
   \hline
   All relations (conjectural) & 0 & 1& 1& 4& 7& 18& 35& 80& 162 & 352 & 723 & 1530 \\ 
   \hline
  \end{tabular}
 \end{center}
\end{table}

The aim of this paper is to investigate the ``missing'' relations among $\mathcal{O}(\boldsymbol{k})$'s, that is, the relations which do not come from (\ref{ohno_eq}). 
As particular cases of such relations, we prove the following two families of relations (Theorems \ref{main1} and \ref{main2}) which generalize (\ref{eq3}) and (\ref{eq1}), (\ref{eq2}), respectively.

Unfortunately, these families still do not exhaust all the missing relations. 
The complete set of relations which gives the deficit of the dimension is still to be understood. 
In the search of further missing relations, we have found several new conjectural families of such relations. 
These conjectures will be stated in Section 4.
\begin{thm}[Double Ohno relation] \label{main1}
 Let $d$ and $n_0,\dots,n_{2d}$ be non-negative integers, and $\boldsymbol{k}$ the index
 \[
  (\{2\}^{n_0},1,\{2\}^{n_1},3,\dots,\{2\}^{n_{2d-2}},1,\{2\}^{n_{2d-1}},3,\{2\}^{n_{2d}}).
 \]
 Then, we have
 \begin{align*}
  \sum_{|\boldsymbol{e}|=m}
  \mathcal{O}(\boldsymbol{k}\oplus\boldsymbol{e}) 
  =\sum_{|\boldsymbol{e}'|=m}
  \mathcal{O}(\boldsymbol{k}^\dagger\oplus\boldsymbol{e}') 
 \end{align*}
 for a non-negative integer $m$,
 or equivalently, 
 \[
  \sum_{|\boldsymbol{e}_1|=m_1} 
  \sum_{|\boldsymbol{e}_2|=m_2}
  \zeta (\boldsymbol{k}\oplus\boldsymbol{e}_1\oplus\boldsymbol{e}_2) 
  =\sum_{|\boldsymbol{e}'_1|=m_1} 
  \sum_{|\boldsymbol{e}'_2|=m_2}
  \zeta (\boldsymbol{k}^\dagger\oplus\boldsymbol{e}'_1\oplus\boldsymbol{e}'_2) 
 \]  
 for non-negative integers $m_1$ and $m_2$.
\end{thm}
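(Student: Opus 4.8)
The plan is to collapse the two families of sums in the statement into a single two-variable generating series and then prove the equality of those series by a connected-sum argument of Seki--Yamamoto type. First I would carry out the reduction. Summing the defining series of $\mathcal{O}$ against a bookkeeping variable $Y$, a geometric-series computation gives, for any admissible $\boldsymbol{j}=(j_1,\dots,j_r)$,
\[
 \sum_{\boldsymbol{e}} Y^{|\boldsymbol{e}|}\,\mathcal{O}(\boldsymbol{j}\oplus\boldsymbol{e})
 =\sum_{0<m_1<\cdots<m_r}\prod_{i=1}^{r}\frac{1}{m_i^{\,j_i-2}(m_i-X)(m_i-Y)}
 =:\mathcal{Z}(\boldsymbol{j})\in\mathbb{R}[[X,Y]],
\]
a function symmetric in $X$ and $Y$. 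Comparing coefficients of $Y^m$ shows that Theorem~\ref{main1} is equivalent to $\mathcal{Z}(\boldsymbol{k})=\mathcal{Z}(\boldsymbol{k}^\dagger)$. Setting $Y=0$ collapses $\mathcal{Z}(\boldsymbol{j})$ to $\sum\prod m_i^{-(j_i-1)}(m_i-X)^{-1}=\mathcal{O}(\boldsymbol{j})$, so the specialization $Y=0$ of $\mathcal{Z}(\boldsymbol{k})=\mathcal{Z}(\boldsymbol{k}^\dagger)$ is precisely Ohno's relation (\ref{ohno_eq}); the genuinely new content lies in the mixed monomials $X^iY^j$ with $i,j\ge1$. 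It is worth stressing that Theorem~\ref{ohno} alone cannot supply this, since $(\boldsymbol{k}\oplus\boldsymbol{e})^\dagger$ is not of the form $\boldsymbol{k}^\dagger\oplus\boldsymbol{e}'$ (the depths already disagree).

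To prove $\mathcal{Z}(\boldsymbol{k})=\mathcal{Z}(\boldsymbol{k}^\dagger)$ I would introduce a connected double sum
\[
 Z(\boldsymbol{k};\boldsymbol{l})=\sum_{\substack{0<m_1<\cdots<m_r\\ 0<n_1<\cdots<n_s}}
 \Bigl(\prod_{i}g_{k_i}(m_i)\Bigr)\Bigl(\prod_{j}g_{l_j}(n_j)\Bigr)\,C(m_r,n_s),
 \qquad g_k(m)=\frac{1}{m^{\,k-2}(m-X)(m-Y)},
\]
where $C(m,n)=C(m,n;X,Y)$ is a two-parameter deformation of the Seki--Yamamoto connector $\tfrac{m!\,n!}{(m+n)!}$, normalized so that $C(m,0)=C(0,n)=1$. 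The boundary values are then $Z(\boldsymbol{k};\emptyset)=\mathcal{Z}(\boldsymbol{k})$ and $Z(\emptyset;\boldsymbol{k}^\dagger)=\mathcal{Z}(\boldsymbol{k}^\dagger)$. The engine of the method is a pair of transport relations of the schematic shape
\[
 \sum_{m>a}\bigl(\text{one elementary weight in }m\bigr)\,C(m,n)=\bigl(\text{explicit factor}\bigr)\,C(a,n),
\]
together with its mirror obtained by summing over $n$, which allow a single letter of the index to be moved from the $\boldsymbol{k}$-side to the $\boldsymbol{l}$-side while trading an ``exponent'' letter for a ``new-variable'' letter and conversely. Since the duality $\dagger$ is exactly the reversal interchanging these two letter types, iterating the transport relations turns $Z(\boldsymbol{k};\emptyset)$ into $Z(\emptyset;\boldsymbol{k}^\dagger)$, which is the desired identity.

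The restriction to indices of the form $(\{2\}^{n_0},1,\{2\}^{n_1},3,\dots)$ is what makes this machine run: these are precisely the words built from the matched letters for which the two transport relations close up, so that the family is stable under the step-by-step transport (the process can be organized to remove one ``$1,\{2\}^{*},3$'' block at a time), and it terminates at $\boldsymbol{k}^\dagger$, which one checks is again of the same shape with the parameters $(n_0,\dots,n_{2d})$ reversed. The main obstacle is the construction of the connector $C(m,n;X,Y)$ together with the verification of its two transport relations: the $Y=0$ specialization should recover the known connected-sum proof of Ohno's relation, but accommodating the second pole $(m-Y)^{-1}$ forces a genuinely two-parameter connector, and checking that its transport identities hold and respect the ``$1,\{2\}^{*},3$'' grammar is the crux. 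An alternative I would keep in reserve is an iterated-integral realization of $\mathcal{Z}$ in which $\dagger$ becomes the path reversal $t\mapsto 1-t$ and the variables $X,Y$ deform the two differential forms; there the same obstacle reappears as the need to show that reversal preserves the deformed integrand exactly for this family.
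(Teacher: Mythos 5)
Your opening reduction is correct and worth keeping: the two--variable series $\mathcal{Z}(\boldsymbol{j})=\sum_{0<m_1<\cdots<m_r}\prod_{i=1}^{r}m_i^{-(j_i-2)}(m_i-X)^{-1}(m_i-Y)^{-1}$ does equal $\sum_{\boldsymbol{e}}Y^{|\boldsymbol{e}|}\mathcal{O}(\boldsymbol{j}\oplus\boldsymbol{e})$, it is symmetric in $X,Y$, its $Y=0$ specialization is $\mathcal{O}(\boldsymbol{j})$, and Theorem \ref{main1} is equivalent to $\mathcal{Z}(\boldsymbol{k})=\mathcal{Z}(\boldsymbol{k}^\dagger)$. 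But from that point on you have a program, not a proof. The connector $C(m,n;X,Y)$ is never constructed, its normalization and its two transport relations are never written down, and the decisive claim --- that the transport process closes up exactly on the words $(\{2\}^{n_0},1,\{2\}^{n_1},3,\dots,1,\{2\}^{n_{2d-1}},3,\{2\}^{n_{2d}})$ and terminates at $\boldsymbol{k}^\dagger$ --- is asserted with no verification. You yourself flag this as ``the crux,'' and it is: all of the mathematical content of the theorem, including the reason the statement fails outside this special family, is concentrated in exactly the step you leave open. A referee cannot accept ``a suitable two--parameter deformation of $m!\,n!/(m+n)!$ should exist'' in place of the deformation itself, especially since the restriction to this family means the naive one--parameter connector argument for Ohno's relation does not simply extend.

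For contrast, the paper needs no new analytic object at all. It encodes the double sum as $\mathit{Z}(\sigma_{m_1}\sigma_{m_2}I(\boldsymbol{k}))$ in Hoffman's algebra and reduces Theorem \ref{main1} to Theorem \ref{ohno} via the purely formal commutation identity $\sigma_{m_1}\tau\sigma_{m_2}\tau(w)=\tau\sigma_{m_2}\tau\sigma_{m_1}(w)$ for $w\in y\,\mathbb{Q}\langle xy,yx\rangle x$ (Lemma \ref{lem}); since both composites are ring homomorphisms after conjugation by a one--letter shift, that identity is checked on the generators $xy$ and $yx$ alone, where both sides send $xy$ to $xy(1-xt_1-yt_2)^{-1}$. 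This is where the hypothesis on $\boldsymbol{k}$ enters, and it also shows that your remark that Theorem \ref{ohno} ``alone cannot supply'' the mixed $X^iY^j$ terms is misleading: the double Ohno relation is obtained in the paper entirely from instances of Ohno's relation applied to auxiliary words, together with duality. If you want to pursue the connected--sum route, you must actually exhibit $C(m,n;X,Y)$, prove its boundary and transport identities, and show the iteration reaches $Z(\emptyset;\boldsymbol{k}^\dagger)$ precisely for this family; alternatively, you could graft your (correct) generating--series reformulation onto the paper's operator identity, which would close the gap with no analysis at all.
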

\begin{rem}
 The relation (\ref{eq3}) is the case $\boldsymbol{k}=(1,3,2)$ and $m=1$ of Theorem \ref{main1}.
\end{rem}

We denote by $\boldsymbol{k}\sha\boldsymbol{l}$ the formal sum of all shuffles of the indices $\boldsymbol{k}$ and $\boldsymbol{l}$ e.g., 
\[
 (a)\sha (b,c)=(a,b,c)+(b,a,c)+(b,c,a). 
\]
For an admissible index $\boldsymbol{k}$ and an integer $s\ge2$, we put
 \[
  F(s;\boldsymbol{k})
  :=\mathcal{O}((s)\sha \boldsymbol{k})-\mathcal{O}((s)\sha \boldsymbol{k}^\dagger).
 \]
\begin{thm}  \label{main2}
 For integers $s,t\ge2$, we have
 \[
  F(s;(t+1))=F(t;(s+1)).
 \]
\end{thm}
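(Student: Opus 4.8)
\emph{Proof strategy.} The plan is to pass to the Dirichlet-series description
\[
 \mathcal{O}(\boldsymbol{k})=\sum_{1\le m_1<\cdots<m_r}\prod_{i=1}^{r}\frac{1}{m_i^{\,k_i-1}(m_i-X)}
\]
and to exploit that $\mathcal{O}$, while \emph{not} multiplicative for the component shuffle $\sha$, is multiplicative for the underlying quasi-shuffle up to an explicit correction. Splitting the product $\mathcal{O}(s)\,\mathcal{O}(\boldsymbol{j})$ according to whether the extra summation index $n$ falls in a gap of $m_1<\cdots<m_r$ or coincides with some $m_l$, I would first establish the contraction formula
\[
 \mathcal{O}\bigl((s)\sha\boldsymbol{j}\bigr)=\mathcal{O}(s)\,\mathcal{O}(\boldsymbol{j})-D_s(\boldsymbol{j}),\qquad
 D_s(\boldsymbol{j}):=\sum_{l}\ \sum_{m_1<\cdots<m_r}\frac{1}{m_l^{\,j_l+s-2}(m_l-X)^2}\prod_{i\neq l}\frac{1}{m_i^{\,j_i-1}(m_i-X)},
\]
where $\boldsymbol{j}=(j_1,\dots,j_r)$ and the outer sum runs over $1\le l\le r$.

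Next I would apply this with $\boldsymbol{j}=(t+1)$ and with $\boldsymbol{j}=(t+1)^{\dagger}=(\{1\}^{t-1},2)$ and subtract. Because $\mathcal{O}((t+1))=\mathcal{O}((t+1)^{\dagger})$ is exactly Ohno's relation (Theorem \ref{ohno}), the product terms $\mathcal{O}(s)\,\mathcal{O}(\cdot)$ cancel, leaving
\[
 F\bigl(s;(t+1)\bigr)=D_s\bigl((t+1)^{\dagger}\bigr)-D_s\bigl((t+1)\bigr).
\]
Here $D_s((t+1))=\sum_{m\ge1}m^{-(s+t-1)}(m-X)^{-2}$ is a single depth-one contraction that is manifestly symmetric in $s$ and $t$, so it coincides with $D_t((s+1))$ and drops out of the difference $F(s;(t+1))-F(t;(s+1))$. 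This reduces the theorem to the single symmetric identity
\[
 D_s\bigl((\{1\}^{t-1},2)\bigr)=D_t\bigl((\{1\}^{s-1},2)\bigr).
\]

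The crux is this reduced identity, and I expect it to be the main obstacle: its two sides are deformed multiple series of \emph{different} depths ($t$ against $s$), so no reindexing of the summation variables can match them termwise, and a genuine depth-changing transformation of Ohno type is required. To supply it I would resolve the double pole $(m_l-X)^{-2}$ into $(m_l-X)^{-1}(m_l-Y)^{-1}$ with a second parameter $Y$, so that $D_s$ appears as the limit $Y\to X$ of the contraction defect of a product $\mathcal{O}[Y](s)\,\mathcal{O}[X]\bigl((t+1)^{\dagger}\bigr)$ of two Ohno sums carrying distinct deformation variables. I would then prove the required two-variable symmetry through the iterated-integral (double-Ohno) representation of such series, in which Ohno's duality is realized by the reflection $u\mapsto1-u$ of the integration simplex; this reflection should interchange the roles of $s$ and $t$ while swapping $\boldsymbol{k}$ with $\boldsymbol{k}^{\dagger}$, after which specializing $Y\to X$ yields the claim. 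The delicate points are where the real work lies: localizing the $X$- and $Y$-twists correctly for a multi-depth Ohno sum, and controlling the additional integration forced by the double pole. Failing a clean integral argument, I would fall back on a connected-sum (transport) proof, constructing a connector that carries the $s$-side index to the $t$-side while accommodating both the deformation parameter and the contraction.
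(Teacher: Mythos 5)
Your opening reduction is correct and is essentially the generating-function form of the paper's first step (Lemma \ref{Fm}): the stuffle decomposition of $\mathcal{O}(s)\,\mathcal{O}(\boldsymbol{j})$ into insertion terms plus contraction terms is valid for the series $\sum\prod m_i^{-(k_i-1)}(m_i-X)^{-1}$, the product terms do cancel by Ohno's relation $\mathcal{O}(t+1)=\mathcal{O}((t+1)^{\dagger})$, and the depth-one contraction $D_s((t+1))=\sum_{m}m^{-(s+t-1)}(m-X)^{-2}$ is indeed symmetric in $s$ and $t$. So the theorem is correctly reduced to $D_s((\{1\}^{t-1},2))=D_t((\{1\}^{s-1},2))$, and you rightly identify this as the crux.

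The problem is that you do not prove this reduced identity; you only sketch two speculative strategies (a two-variable $X,Y$ deformation with an iterated-integral reflection, or a connected-sum transport) and explicitly defer ``the real work.'' Since the reduced identity carries the entire content of the theorem --- its two sides are sums of Ohno sums of depths $t$ and $s$ respectively, and no termwise matching or direct application of Ohno's relation closes the gap --- the proposal as written is incomplete. For comparison, the paper finishes from essentially the same point by a different and more elementary device: rewriting $\sum_{i}(\{1\}^{i},s+m_1+1,\{1\}^{t-2-i},2)$ as $(s+m_1+1)\sha(t)^{\dagger}-(\{1\}^{t-2},2,s+m_1+1)$ yields the recurrence $D_{m-1}(s,t)=D_m(s-1,t)+D_m(s,t-1)$ in the weight parameters, which reduces everything to the boundary case $t=2$ (or $s=2$); there the identity becomes a statement about double zeta values that is verified by an explicit lattice-point count combined with Ohno's relation. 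If you want to complete your route, you would either need to carry out the two-variable integral argument in full (which is not obviously easier than the theorem itself) or import a recurrence of the above type, at which point you have reproduced the paper's proof.
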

\begin{rem}
 The relations (\ref{eq1}) and (\ref{eq2}) are the cases $(s,t)=(2,3)$ and $(s,t)=(2,4)$ of Theorem \ref{main2}, respectively.
\end{rem}

This paper is organized as follows:
In Section 2, we prove Theorem \ref{main1}, and in Section 3, we prove Theorem \ref{main2}. 
In Section 4, we give several conjectural relations among $\mathcal{O}(\boldsymbol{k})$'s.

%%%%%%%%%%%%%%%%%%%%%%%%%%%%%%%%%%%%%%%%%%%%%%%%%%%%%%%%%%%%%%%%%%%%%%%%%%%%%%%%%%%%%%%%%%%%%%%%%%%%
\section{Proof of the double Ohno relation}
%%%%%%%%%%%%%%%%%%%%%%%%%%%%%%%%%%%%%%%%%%%%%%%%%%%%%%%%%%%%%%%%%%%%%%%%%%%%%%%%%%%%%%%%%%%%%%%%%%%%
\subsection{Notation}
We recall Hoffman's algebraic setup with a slightly different convention (see Hoffman \cite{Hof97}).
Let $\mathfrak{H}:=\mathbb{Q} \left\langle x,y \right\rangle$ be the non-commutative polynomial ring in two indeterminates $x$ and $y$.
We define a $\mathbb{Q}$-linear map $\mathit{Z}\colon y\mathfrak{H}x \to \mathbb{R}$ 
by $\mathit{Z}( yx^{k_1-1}\cdots yx^{k_r-1}):= \zeta (k_1,\ldots, k_r)$. 
Let $\tau$ be the anti-automorphism of $\mathfrak{H}$ that interchanges $x$ and $y$. 
Put $\widehat{\frak{H}}:=\mathbb{Q}\left\langle\left\langle x,y\right\rangle\right\rangle$. 
We define the map $\sigma$ as an automorphism of $\widehat{\frak{H}}[[t]]$ satisfying $\sigma(x)=x$ and $\sigma(y)=y(1-xt)^{-1}$, and the $\mathbb{Q}$-linear map $\sigma_m\colon\frak{H} \to \frak{H}$ as the homogeneous degree $m$ components of $\sigma$. 
We define the $\mathbb{Q}$-linear map $T\colon \mathfrak{H} \to \mathfrak{H}$ 
by $T(1):=1$ and $T(u_1\cdots u_r):=u_r\cdots u_1$ for $u_i\in\{x,y\} \,(i=1,\ldots,r)$. 
We also define the $\mathbb{Q}$-linear operators $L_y, R_x$ on $\frak{H}$ by $L_y(w)=yw, R_x(w)=wx \, (w\in\frak{H})$.

We put $I(k_1,\ldots,k_r):=yx^{k_1-1}\cdots yx^{k_r-1}$.
Then we have
\[
 \sum_{|\boldsymbol{e}|=m} I(\boldsymbol{k}\oplus\boldsymbol{e})
 =\sigma_{m} (I(\boldsymbol{k})) 
\]  
and
\begin{align*}
 &I(\{2\}^{n_0},1,\{2\}^{n_1},3,\dots,\{2\}^{n_{2d-2}},1,\{2\}^{n_{2d-1}},3,\{2\}^{n_{2d}}) \\
 &=y(xy)^{n_0}(yx)^{n_1+1}(xy)^{n_2+1}(yx)^{n_3+1}\cdots (xy)^{n_{2d-2}+1}(yx)^{n_{2d-1}+1}(xy)^{n_{2d}}x.
\end{align*}
Thus, Theorem \ref{main1} is restated as follows:
\begin{thm} \label{main1b}
 For $w\in y\,\mathbb{Q}\langle xy,yx\rangle x$ and non-negative integers $m_1,m_2$, we have
 \[
  \sigma_{m_1}\sigma_{m_2} (w-\tau(w))\in \ker\mathit{Z}. 
 \]
\end{thm}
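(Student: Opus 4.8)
The plan is to work with the operator reformulation in Theorem~\ref{main1b} and to prove the equivalent identity $Z(\sigma_{m_1}\sigma_{m_2}(w))=Z(\sigma_{m_1}\sigma_{m_2}(\tau(w)))$ for every $w\in V:=y\,\mathbb{Q}\langle xy,yx\rangle x$. First I would record how $\tau$ acts on $V$. Since $\tau$ is an anti-automorphism with $\tau(xy)=xy$ and $\tau(yx)=yx$, a word $w=y\,b_1\cdots b_k\,x$ with each $b_i\in\{xy,yx\}$ is sent to $\tau(w)=y\,b_k\cdots b_1\,x$; that is, $\tau$ preserves $V$ and merely reverses the order of the blocks. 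I would also introduce the conjugate automorphism $\sigma':=\tau\sigma\tau$, compute $\sigma'(x)=(1-yt)^{-1}x$ and $\sigma'(y)=y$ (so that $\sigma'$ inserts $y$'s in front of $x$'s, dually to $\sigma$), and note the intertwining $\tau\,\sigma_m=\sigma'_m\,\tau$ that follows by taking the degree-$m$ part in $t$.

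Next I would apply Ohno's relation to localise the difficulty. In operator form Theorem~\ref{ohno} reads $Z\circ\sigma_m=Z\circ\sigma_m\circ\tau$ on $y\mathfrak{H}x$. Applying this on the right-hand side to the admissible element $\sigma_{m_2}(\tau(w))$ and using $\tau\sigma_{m_2}\tau=\sigma'_{m_2}$, the target identity is equivalent to $Z\bigl(\sigma_{m_1}(\sigma_{m_2}-\sigma'_{m_2})(w)\bigr)=0$. At this point I would emphasise the genuine obstruction: any further \emph{formal} use of Ohno's relation (conjugating by $\tau$ and using $\tau\sigma'=\sigma\tau$) simply returns the original identity, so single Ohno is by itself insufficient. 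This is consistent with the table, where the present relations are not contained in the span of~(\ref{ohno_eq}); a new analytic input is unavoidable.

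That input I would supply through an explicit generating-series/iterated-integral representation. Writing $w=\ell_1\cdots\ell_N$ in the letters $x,y$, the composite $\sigma^{(X)}\sigma^{(Y)}$ inserts the symmetric factor $(1-xX)^{-1}(1-xY)^{-1}$ after each $y$; integrating out the inserted $\tfrac{dr}{r}$-forms on the simplex $0<t_1<\cdots<t_N<1$ turns each such insertion into a factor built from the complete symmetric functions $h_n(X,Y)$ and the logarithms of ratios of neighbouring integration variables. I would then apply the substitution $t\mapsto 1-t$, which realises $\tau$ on the underlying forms and which, for a single parameter, is exactly the integral proof of Ohno's relation. The essential point is that for two parameters the symmetric factor expands over the $2^{\#\{i:\ell_i=y\}}$ ways of assigning $X$ or $Y$ to each $y$, and the substitution transports these labels onto the \emph{former} $x$-letters, so the integrand does not match that of $\tau(w)$ term by term.

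The main obstacle is precisely to show that these contributions nevertheless re-sum to the two-parameter factor attached to $\tau(w)$, and this is where the block structure of $V$ is indispensable: it is the grouping of letters into $xy$- and $yx$-blocks that permits the reindexing needed to recombine the labelled terms into the symmetric factor of the dual word. I expect to organise this as an induction on the number $k$ of blocks, peeling the outer blocks with the help of the commutations $\sigma R_x=R_x\sigma$ and $\sigma'L_y=L_y\sigma'$ so that the boundary letters $y\cdots x$ interact cleanly with the operators, and reducing the two-parameter identity for $k$ blocks to the case of $k-1$ blocks; the cases $k\le 1$ are trivial because then $\tau(w)=w$. Carrying out this re-summation rigorously, rather than the formal reductions of the first two steps, is where essentially all the work lies.
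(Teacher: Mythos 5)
Your reduction of the statement to $Z\bigl(\sigma_{m_1}(\sigma_{m_2}-\sigma'_{m_2})(w)\bigr)=0$, where $\sigma'_{m}:=\tau\sigma_{m}\tau$, is correct and coincides with how the paper begins, but the proposal goes astray at the decisive moment. Your claim that ``any further formal use of Ohno's relation simply returns the original identity, so \dots a new analytic input is unavoidable'' is not right: what is missing is not analytic input but a purely algebraic identity of words, namely that on the block subspace $y\,\mathbb{Q}\langle xy,yx\rangle x$ the operators $\sigma_{m_1}$ and $\sigma'_{m_2}=\tau\sigma_{m_2}\tau$ \emph{commute},
\[
 \sigma_{m_1}\tau\sigma_{m_2}\tau(w)=\tau\sigma_{m_2}\tau\sigma_{m_1}(w),
\]
which is Lemma \ref{lem} of the paper. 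Granting this, the chain
\[
 Z(\sigma_{m_1}\sigma'_{m_2}(w))
 =Z(\tau\sigma_{m_2}\tau\,\sigma_{m_1}(w))
 =Z(\sigma_{m_2}\sigma_{m_1}(w))
 =Z(\sigma_{m_1}\sigma_{m_2}(w))
\]
closes the loop, the middle equality being Ohno's relation in the conjugated form $Z\circ\tau\sigma_{m}\tau=Z\circ\sigma_{m}$ applied to the admissible word $\sigma_{m_1}(w)$; no integral representation is required. The commutation itself is proved by a short generating-series computation: after conjugating by the cyclic-shift maps that move the boundary letters $y$ and $x$ across the word, both $\sigma^{(1)}\tau'\sigma^{(2)}\tau'$ and $\tau'\sigma^{(2)}\tau'\sigma^{(1)}$ become ring homomorphisms on $\mathbb{Q}\langle xy,yx\rangle$, and one checks that each sends $xy$ to $xy(1-xt_1-yt_2)^{-1}$. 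This is precisely where the hypothesis $w\in y\,\mathbb{Q}\langle xy,yx\rangle x$ enters.

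As written, therefore, your proposal has a genuine gap: the entire content of the theorem is delegated to a ``re-summation'' of labelled iterated-integral contributions that you explicitly do not carry out, and the induction on the number of blocks that is supposed to organise it has no specified inductive step. It is plausible that an integral argument along your lines could be completed (the substitution $t\mapsto 1-t$ is indeed the integral shadow of $\tau$, and the two-parameter insertion factor you describe is the right object), but nothing in the proposal actually makes the labelled terms recombine into the factor attached to $\tau(w)$; the block structure is invoked only as a hope. The fix is to replace the analytic strategy by the commutation identity above and then apply Ohno's relation twice, exactly as in Lemmas \ref{lem} and \ref{sakki}.
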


%%%%%%%%%%%%%%%%%%%%%%%%%%%%%%%%%%%%%%%%%%%%%%%%%%%%%%%%%%%%%%%%%%%%%%%%%%%%%%%%%%%%%%%%%%%
\subsection{Preliminary to the proof}
\begin{lem} \label{lem}
 For $w\in y\,\mathbb{Q}\langle xy,yx\rangle x$ and non-negative integers $m_1,m_2$, we have
 \[
  \sigma_{m_1} \tau \sigma_{m_2} \tau (w)=\tau \sigma_{m_2} \tau \sigma_{m_1} (w).
 \] 
\end{lem}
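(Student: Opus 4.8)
The plan is to understand how the operators $\sigma_m$ and $\tau$ interact on the specific class of words $y\,\mathbb{Q}\langle xy,yx\rangle x$, and to reduce the commutation identity to a statement that can be checked on suitable building blocks. First I would examine the action of $\sigma$ more closely. Since $\sigma$ is an algebra automorphism of $\widehat{\mathfrak{H}}[[t]]$ fixing $x$ and sending $y\mapsto y(1-xt)^{-1}=y\sum_{j\ge0}(xt)^j$, the map $\sigma_m$ inserts a total of $m$ copies of $x$ immediately after the various $y$'s of a word, distributed in all possible ways. Meanwhile $\tau$ is the anti-automorphism swapping $x$ and $y$. The key structural observation I would pursue is how $\tau\sigma_m\tau$ acts: conjugating by $\tau$ turns the ``insert $x$ after each $y$'' operation into an ``insert $y$ after each $x$'' operation, but read in the reversed word; I expect $\tau\sigma_m\tau$ to be describable as a clean insertion operator in its own right (inserting $y$'s before $x$'s, say), so that both sides of the claimed identity become sums over pairs of insertions whose orders can be interchanged.

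Concretely, I would introduce the operator $\bar\sigma := \tau\sigma\tau$ on $\widehat{\mathfrak{H}}[[t]]$, which is the automorphism fixing $y$ and sending $x\mapsto x(1-yt)^{-1}$, with homogeneous components $\bar\sigma_m = \tau\sigma_m\tau$. The identity to prove then reads $\sigma_{m_1}\bar\sigma_{m_2}(w)=\bar\sigma_{m_2}\sigma_{m_1}(w)$, i.e.\ that the two commuting-looking insertion operators genuinely commute on $w$. The next step is to make the commutation transparent. The operator $\sigma_m$ acts by choosing, for each ``slot'' attached to a $y$, how many $x$'s to insert, with the total being $m$; likewise $\bar\sigma_{m_2}$ chooses insertions of $y$'s at $x$-slots. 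I would argue that the insertion slots used by $\sigma$ (sites just after $y$'s) and those used by $\bar\sigma$ (sites just before $x$'s) can be set up so that the two families of operations act on disjoint, well-defined positions, and that neither operation creates or destroys a slot of the other type. This is exactly where the hypothesis $w\in y\,\mathbb{Q}\langle xy,yx\rangle x$ is essential: such words begin with $y$, end with $x$, and are built from the blocks $xy$ and $yx$, so every $y$ is immediately followed by either $x$ or (at the word's end) nothing problematic, and every $x$ is immediately preceded by a $y$, keeping the two insertion processes from interfering.

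The cleanest route is therefore to prove the commutation at the generating-function level: define $\Sigma := \sigma$ acting with a variable $t_1$ and $\bar\Sigma := \bar\sigma$ acting with an independent variable $t_2$, and show $\Sigma\bar\Sigma(w)=\bar\Sigma\Sigma(w)$ as elements of $\widehat{\mathfrak{H}}[[t_1,t_2]]$; extracting the coefficient of $t_1^{m_1}t_2^{m_2}$ then yields the lemma. Because $\Sigma$ and $\bar\Sigma$ are both algebra automorphisms, it suffices to compare their composites on the generators $x$ and $y$ and to verify that the relevant structure of $w$ is preserved; but since the two automorphisms do \emph{not} commute on all of $\widehat{\mathfrak{H}}[[t_1,t_2]]$, the crux is to show they agree after application to words lying in the sub-object generated by $y(\cdots)x$ with $\{xy,yx\}$-structure. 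I would do this by a direct recursion on the block decomposition of $w$, peeling off a leading block and tracking how each automorphism rewrites it, checking that the two orders of application produce identical output on each block and that the interface between consecutive blocks contributes nothing asymmetric.

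The main obstacle I anticipate is precisely that $\sigma$ and $\bar\sigma$ do not commute globally, so the identity is genuinely special to the class $y\,\mathbb{Q}\langle xy,yx\rangle x$; the delicate point is to pin down exactly why the $x$-insertions of $\sigma$ and the $y$-insertions of $\bar\sigma$ fail to collide on these words. In particular, when $\sigma$ inserts $x$'s after a $y$ it may create new adjacencies that $\bar\sigma$ could in principle act on, and vice versa, so I must verify that any such newly created sites are handled identically regardless of the order in which the two operators are applied. I expect this bookkeeping — rather than any deep structural input — to be the heart of the argument, and I would organize it as an explicit computation on the generators $x$, $y$ tracked through the block structure, so that the commutation becomes a finite, verifiable identity rather than an appeal to a general commuting-operators principle.
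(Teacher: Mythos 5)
Your setup matches the paper's first move: pass to generating functions in two independent variables $t_1,t_2$, and recognize $\tau\sigma\tau$ as the automorphism that fixes $y$ and inserts $y$'s at the $x$-sites (note, though, that since $\tau$ is an \emph{anti}-automorphism you get $x\mapsto(1-yt)^{-1}x$, not $x(1-yt)^{-1}$). The gap is in the mechanism you propose for the actual verification. You suggest checking that the two orders of application ``produce identical output on each block'' of the decomposition $w=y\cdot u_1\cdots u_k\cdot x$ with $u_i\in\{xy,yx\}$. Since $\sigma^{(1)}(\tau\sigma^{(2)}\tau)$ and $(\tau\sigma^{(2)}\tau)\sigma^{(1)}$ are both automorphisms, such a blockwise check would indeed suffice --- but it fails: already on the single letter $x$ one composite gives $(1-y(1-xt_1)^{-1}t_2)^{-1}x$ and the other $(1-yt_2)^{-1}x$, which differ at order $t_1t_2$, and they likewise disagree on $y$, on $xy$, and on $yx$. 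The identity is \emph{not} a blockwise phenomenon; it holds only for the whole word with its boundary letters $y\cdots x$, so the ``bookkeeping'' of newly created insertion sites that you defer to the end is not routine --- it is the entire content of the lemma, and your proposal contains no workable device for it.

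The paper's device, which your outline is missing, is a conjugation by cyclic shifts. Writing $\tau=\tau'T$ with $\tau'$ the letter-swap automorphism and $T$ the word reversal, one has $T\sigma^{(2)}T=S_y^{-1}\sigma^{(2)}S_y$ on $\widetilde{\mathfrak{H}}y$, where $S_u$ moves a trailing $u$ to the front. This turns both sides of the desired identity into $S_x^{-1}\,\Phi\,S_x$ with $\Phi$ equal to $\sigma^{(1)}\tau'\sigma^{(2)}\tau'$ and $\tau'\sigma^{(2)}\tau'\sigma^{(1)}$ respectively. These $\Phi$'s are ring homomorphisms, $S_x(w)$ lies in the subalgebra $\mathbb{Q}\langle xy,yx\rangle$, and --- unlike your composites --- the two $\Phi$'s genuinely agree on its generators $xy$ and $yx$ (both send $xy$ to $xy(1-xt_1-yt_2)^{-1}$). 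That is the step that converts the global statement into a finite check on generators; without it, or an explicit bijection between insertion patterns that you would still have to construct, the proof does not close.
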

\begin{proof}
 The statement is equivalent to 
 \[
  \sigma^{(1)} \tau \sigma^{(2)} \tau (w)
  =\tau \sigma^{(2)} \tau \sigma^{(1)} (w) \quad (w\in y\,\mathbb{Q}\langle xy,yx\rangle x),
 \] 
 where $\sigma^{(1)}$, $\sigma^{(2)}$ are $\mathbb{Q}[[t_1,t_2]]$-automorphism of $\widetilde{\mathfrak{H}}:=\widehat{\mathfrak{H}}[[t_1,t_2]]$ defined by 
 $\sigma^{(i)}(x)=x$, $\sigma^{(i)}(y)=y(1-xt_i)^{-1}$ for $i\in\{1,2\}$.

 Let $\tau'$ be the automorphism on $\widetilde{\mathfrak{H}}$ that interchanges $x$ and $y$, and
 $S_u \,(u\in\{x,y\})$ be a map  on $\widetilde{\mathfrak{H}}u$ defined by $S_u(vu) =uv$ for $v\in\widetilde{\mathfrak{H}}$. 
 We can easily check tha $\tau=\tau' T=T\tau'$ on $\widetilde{\mathfrak{H}}$ 
 and $T\sigma^{(2)} T=S_y^{-1} \sigma^{(2)} S_y$ on $\widetilde{\mathfrak{H}}y$. 
 Thus we have
 \begin{align*}
  \sigma^{(1)} \tau \sigma^{(2)} \tau 
  &=\sigma^{(1)} \tau'T \sigma^{(2)} T\tau' \\
  &=\sigma^{(1)} \tau' S_y^{-1} \sigma^{(2)} S_y \tau' \\ 
  &=\sigma^{(1)} S_x^{-1} \tau' \sigma^{(2)} S_y \tau' \\   
  &=S_x^{-1} \sigma^{(1)} \tau' \sigma^{(2)} \tau' S_x 
 \end{align*}
 on $\widetilde{\mathfrak{H}}x$. 
 By a similar calculation, 
 \[
  \tau \sigma^{(2)} \tau \sigma^{(1)} 
  =S_x^{-1} \tau' \sigma^{(2)} \tau' \sigma^{(1)} S_x 
 \]
 on $\widetilde{\mathfrak{H}}x$. 
 Since $\sigma^{(1)} \tau' \sigma^{(2)} \tau'$ and $\tau' \sigma^{(2)} \tau' \sigma^{(1)}$ are ring homomorphisms, 
 and $S_x(w)\in\mathbb{Q}\langle xy,yx \rangle$, 
 it is sufficient to check $\sigma^{(1)} \tau' \sigma^{(2)} \tau'(v)=\tau' \sigma^{(2)} \tau' \sigma^{(1)}(v)$ 
 for $v\in\{xy,yx\}$.
 Moreover, since $\sigma^{(1)} \tau' \sigma^{(2)} \tau'(yx)=\tau' (\tau' \sigma^{(1)} \tau' \sigma^{(2)} (xy))$ and 
 $\tau' \sigma^{(2)} \tau' \sigma^{(1)} (yx)=\tau' (\sigma^{(2)} \tau' \sigma^{(1)} \tau' (xy))$, 
 we have only to check the case $v=xy$ from the symmetry of $t_1$ and $t_2$.
 By the direct calculation, we have
 \begin{align*}
  \sigma^{(1)} \tau' \sigma^{(2)} \tau' (xy)
  &=\sigma^{(1)} \tau' \sigma^{(2)} (yx) \\
  &=\sigma^{(1)} \tau' (y(1-xt_2)^{-1}x) \\
  &=\sigma^{(1)} (x(1-yt_2)^{-1}y) \\
  &=\sigma^{(1)} (xy(1-yt_2)^{-1}) \\
  &=xy(1-xt_1)^{-1}(1-y(1-xt_1)^{-1}t_2)^{-1} \\
  &=xy(1-xt_1)^{-1}((1-xt_1-yt_2)(1-xt_1)^{-1})^{-1} \\
  &=xy(1-xt_1-yt_2)^{-1}
 \end{align*}
 and
 \begin{align*}
  \tau' \sigma^{(2)} \tau' \sigma^{(1)} (xy)
  &=\tau' \sigma^{(2)} \tau' (xy(1-xt_1)^{-1}) \\
  &=\tau' \sigma^{(2)} (yx(1-yt_1)^{-1}) \\
  &=\tau' (y(1-xt_2)^{-1}x (1-y(1-xt_2)^{-1}t_1)^{-1}) \\
  &=\tau' (yx(1-xt_2)^{-1}( (1-xt_2-yt_1)(1-xt_2)^{-1} )^{-1}) \\
  &=\tau' (yx (1-xt_2-yt_1)^{-1}) \\
  &=xy (1-xt_1-yt_2)^{-1}.
 \end{align*}
 This finishes the proof.
\end{proof}

%%%%%%%%%%%%%%%%%%%%%%%%%%%%%%%%%%%%%%%%%%%%%%%%%%
\subsection{Proof of Theorem \ref{main1}}
\begin{lem} \label{sakki}
 Let $V$ be a $\mathbb{Q}$-vector space and $\widetilde{\mathit{Z}}\colon y\mathfrak{H}x\rightarrow V$ a $\mathbb{Q}$-linear map. 
 \begin{enumerate}
  \renewcommand{\labelenumi}{(\roman{enumi}).}
  \item If $(\sigma_m-\tau\sigma_m\tau)(w) \in\ker\widetilde{\mathit{Z}}$ for all $m\in\mathbb{Z}_{\ge0}$ and $w\in y\mathfrak{H}x$, 
   then 
   \[
    (\sigma_{m_1}\sigma_{m_2}-\tau\sigma_{m_1}\sigma_{m_2}\tau)(w) \in\ker\widetilde{\mathit{Z}}
   \]
   for all $m_1,m_2\in\mathbb{Z}_{\ge0}$ and $w\in y\mathbb{Q}\langle xy,yx\rangle x$.
  \item If $(\sigma_m-\sigma_m\tau)(w) \in\ker\widetilde{\mathit{Z}}$ for all $m\in\mathbb{Z}_{\ge0}$ and $w\in y\mathfrak{H}x$, 
   then 
   \[
    (\sigma_{m_1}\sigma_{m_2}-\sigma_{m_1}\sigma_{m_2}\tau)(w) \in\ker\widetilde{\mathit{Z}}
   \]
   for all $m_1,m_2\in\mathbb{Z}_{\ge0}$ and $w\in y\mathbb{Q}\langle xy,yx\rangle x$.
 \end{enumerate}
 For $w\in y\,\mathbb{Q}\langle xy,yx\rangle x$ and non-negative integers $m_1,m_2$, we have
  \[
   \sigma_{m_1} \tau \sigma_{m_2} \tau (w)=\tau \sigma_{m_2} \tau \sigma_{m_1} (w).
  \] 
\end{lem}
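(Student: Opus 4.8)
The plan is to treat the final displayed identity as Lemma~\ref{lem} (it is verbatim that statement) and to use it as the main tool, reserving the real work for parts (i) and (ii). Writing $\overline{\sigma}_m:=\tau\sigma_m\tau$, Lemma~\ref{lem} says exactly that $\sigma_a$ and $\overline{\sigma}_b$ commute on $y\,\mathbb{Q}\langle xy,yx\rangle x$, i.e. $\sigma_a\overline{\sigma}_b(w)=\overline{\sigma}_b\sigma_a(w)$ there, for all $a,b$. I will combine this with a telescoping of the double composition into single differences, to which the hypotheses of (i) and (ii) apply. Throughout I use that $\sigma_{m_1}\sigma_{m_2}=\sigma_{m_2}\sigma_{m_1}$ (the two insertions involve only powers of $x$), that $\sigma_m$, $\overline{\sigma}_m$ preserve $y\mathfrak{H}x$, and that $\tau$ preserves both $y\mathfrak{H}x$ and $y\,\mathbb{Q}\langle xy,yx\rangle x$.

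For (i) I would telescope
\[
 \sigma_{m_1}\sigma_{m_2}-\tau\sigma_{m_1}\sigma_{m_2}\tau
 =\sigma_{m_1}\sigma_{m_2}-\overline{\sigma}_{m_1}\overline{\sigma}_{m_2}
 =\sigma_{m_1}(\sigma_{m_2}-\overline{\sigma}_{m_2})+(\sigma_{m_1}-\overline{\sigma}_{m_1})\overline{\sigma}_{m_2}.
\]
Applied to $w\in y\,\mathbb{Q}\langle xy,yx\rangle x$, the second summand is $(\sigma_{m_1}-\overline{\sigma}_{m_1})$ evaluated at $\overline{\sigma}_{m_2}(w)\in y\mathfrak{H}x$, hence lies in $\ker\widetilde{\mathit{Z}}$ by the hypothesis of (i). For the first summand I use Lemma~\ref{lem} together with $\sigma_{m_1}\sigma_{m_2}=\sigma_{m_2}\sigma_{m_1}$ to pull $\sigma_{m_1}$ through:
\[
 \sigma_{m_1}(\sigma_{m_2}-\overline{\sigma}_{m_2})(w)
 =\sigma_{m_2}\sigma_{m_1}(w)-\overline{\sigma}_{m_2}\sigma_{m_1}(w)
 =(\sigma_{m_2}-\overline{\sigma}_{m_2})\sigma_{m_1}(w),
\]
which lies in $\ker\widetilde{\mathit{Z}}$ because $\sigma_{m_1}(w)\in y\mathfrak{H}x$. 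The mechanism is that the difference operator $\sigma_{m_2}-\overline{\sigma}_{m_2}$ commutes with $\sigma_{m_1}$ on the symmetric space, since each of $\sigma_{m_2}$ and $\overline{\sigma}_{m_2}$ does; this settles (i).

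For (ii) I would attempt the same scheme with the difference operator $\sigma_{m_2}(1-\tau)$ in place of $\sigma_{m_2}-\overline{\sigma}_{m_2}$. Applying the hypothesis of (ii) to the outer variable (twice, to $\sigma_{m_2}w$ and to $\sigma_{m_2}\tau w$) gives $\sigma_{m_1}\sigma_{m_2}(1-\tau)(w)\equiv\sigma_{m_1}\tau\sigma_{m_2}(1-\tau)(w)$, and then Lemma~\ref{lem} (via $\tau\sigma_{m_2}=\overline{\sigma}_{m_2}\tau$) reduces the claim to
\[
 \sigma_{m_1}\sigma_{m_2}(1-\tau)(w)\equiv-\overline{\sigma}_{m_2}\sigma_{m_1}(1-\tau)(w)\pmod{\ker\widetilde{\mathit{Z}}}.
\]
The hard part is that this is circular: using the commutation once more and $\overline{\sigma}_m(1-\tau)=-\tau\sigma_m(1-\tau)$, the right-hand side has exactly the shape of the left-hand side again, so the telescoping closes on itself and proves nothing. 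The obstruction is precisely that $\sigma_{m_2}\tau$ does \emph{not} commute with $\sigma_{m_1}$ on $y\,\mathbb{Q}\langle xy,yx\rangle x$: unlike $\overline{\sigma}_{m_2}$, it cannot be pulled through $\sigma_{m_1}$ onto an element of $y\mathfrak{H}x$, because $\sigma_{m_1}(w)$ is no longer symmetric. This is the genuinely new ``double'' content of the statement; in the two-variable form $\sigma^{(1)}\sigma^{(2)}$ is the automorphism $y\mapsto y(1-xt_1)^{-1}(1-xt_2)^{-1}$, which does not decompose as a combination of single insertions $y\mapsto y(1-xs)^{-1}$ once more than one letter $y$ occurs. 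To overcome it I would reduce to the universal case $\ker\widetilde{\mathit{Z}}=N_0:=\mathrm{span}\{\sigma_m(1-\tau)v : m\ge0,\ v\in y\mathfrak{H}x\}$ (every admissible $\widetilde{\mathit{Z}}$ factors through the quotient by $N_0$), and then try to exhibit $\sigma_{m_1}\sigma_{m_2}(1-\tau)(w)$ explicitly as a $\mathbb{Q}$-linear combination of the generators $\sigma_n(1-\tau)(\,\cdot\,)$, using the full commutation algebra generated by $\{\sigma_m,\overline{\sigma}_m\}$ on $y\,\mathbb{Q}\langle xy,yx\rangle x$ and an induction on the weight of $w$. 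Carrying out this bookkeeping is where the substantive work lies.
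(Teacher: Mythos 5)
Your proof of (i) is correct and is essentially the paper's argument in different packaging: the paper chains the equalities
$\widetilde{\mathit{Z}}(\tau\sigma_{m_1}\sigma_{m_2}\tau(w))
=\widetilde{\mathit{Z}}(\sigma_{m_1}\tau\sigma_{m_2}\tau(w))
=\widetilde{\mathit{Z}}(\tau\sigma_{m_2}\tau\sigma_{m_1}(w))
=\widetilde{\mathit{Z}}(\sigma_{m_2}\sigma_{m_1}(w))$
using the hypothesis and Lemma~\ref{lem}, which is exactly your telescoping
$\sigma_{m_1}\sigma_{m_2}-\overline{\sigma}_{m_1}\overline{\sigma}_{m_2}
=\sigma_{m_1}(\sigma_{m_2}-\overline{\sigma}_{m_2})+(\sigma_{m_1}-\overline{\sigma}_{m_1})\overline{\sigma}_{m_2}$
read term by term. (You are also right that the trailing displayed identity in the statement is just Lemma~\ref{lem} restated, so citing it there is fine.)

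Your treatment of (ii), however, has a genuine gap, and it is a gap of perception rather than of substance: you overlooked that the hypothesis of (ii) is strictly stronger than that of (i), so that (ii) reduces to (i) in one line. Take $m=0$ in the hypothesis of (ii): since $\sigma_0=\mathrm{id}$, this gives $(1-\tau)(v)\in\ker\widetilde{\mathit{Z}}$ for every $v\in y\mathfrak{H}x$, i.e.\ $\widetilde{\mathit{Z}}=\widetilde{\mathit{Z}}\circ\tau$ on $y\mathfrak{H}x$. Combining this with the hypothesis of (ii) gives
$\widetilde{\mathit{Z}}(\tau\sigma_m\tau(w))=\widetilde{\mathit{Z}}(\sigma_m\tau(w))=\widetilde{\mathit{Z}}(\sigma_m(w))$,
so the hypothesis of (i) holds; then (i) yields
$\widetilde{\mathit{Z}}(\sigma_{m_1}\sigma_{m_2}(w))=\widetilde{\mathit{Z}}(\tau\sigma_{m_1}\sigma_{m_2}\tau(w))=\widetilde{\mathit{Z}}(\sigma_{m_1}\sigma_{m_2}\tau(w))$,
using $\widetilde{\mathit{Z}}=\widetilde{\mathit{Z}}\circ\tau$ once more in the last step. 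This is exactly what the paper does. There is nothing circular to break, and no need for the universal kernel $N_0$, the explicit expansion of $\sigma_{m_1}\sigma_{m_2}(1-\tau)(w)$ in the generators $\sigma_n(1-\tau)(\cdot)$, or the induction on weight that you sketch; that program is left uncarried out in your proposal and is in any case unnecessary. The reason your direct telescoping of $\sigma_{m_1}\sigma_{m_2}(1-\tau)$ closed on itself is precisely that you only invoked the (ii)-hypothesis at $m=m_1,m_2$ and never at the degenerate value $m=0$, which is where the extra information ($\tau$-invariance of $\widetilde{\mathit{Z}}$) resides.
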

\begin{proof}
 First, we prove (i). 
 By the assumption and Lemma \ref{lem}, we have
 \begin{align*}
  \widetilde{\mathit{Z}}(\tau\sigma_{m_1}\sigma_{m_2}\tau(w)) 
  &=\widetilde{\mathit{Z}}(\tau\sigma_{m_1}\tau\tau\sigma_{m_2}\tau(w)) \\
  &=\widetilde{\mathit{Z}}(\sigma_{m_1}\tau\sigma_{m_2}\tau(w)) \\
  &=\widetilde{\mathit{Z}}(\tau\sigma_{m_2}\tau\sigma_{m_1}(w)) \\
  &=\widetilde{\mathit{Z}}(\sigma_{m_2}\sigma_{m_1}(w)) \\
  &=\widetilde{\mathit{Z}}(\sigma_{m_1}\sigma_{m_2}(w)).
 \end{align*} 
 Thus we get (i). 
 
 Next, we prove (ii). 
 By putting $m=0$, we have $\widetilde{\mathit{Z}}(v)=\widetilde{\mathit{Z}}(\tau(v))$ for $v\in y\mathfrak{H}x$. 
 Thus (ii) is an immediate consequence of (i). 
\end{proof}
\begin{proof}[Proof of Theorem \ref{main1b}]
 By Theorem \ref{ohno}, $(\widetilde{\mathit{Z}},V)=(\mathit{Z},\mathbb{R})$ satisfies the assumption of Lemma \ref{sakki} (ii). 
 Thus the theorem holds. 
\end{proof}
\begin{rem}
A $q$-analog of multiple zeta values ($q$-MZVs) defined by
\begin{align*}
 \zeta_{q}(k_1,\dots,k_r)
 :=\sum_{1\le m_1<\cdots<m_r } \frac{q^{(k_1-1)m_1+\cdots +(k_r-1)m_r}}{{[m_1]}^{k_1}\cdots{[m_r]}^{k_r}} \in \mathbb{R}[[q]], 
\end{align*}
where $[m]$ denotes the $q$-integer $[m] =(1-q^m)/(1-q)$, 
satisfy Ohno's relation of exactly the same form as usual MZVs (see Bradley \cite{Bra05}). 
Thus they also satisfy the double Ohno relations by Lemma \ref{sakki}.
\end{rem}

%%%%%%%%%%%%%%%%%%%%%%%%%%%%%%%%%%%%%%%%%%%%%%%%%%
\subsection{Double Ohno relations for finite multiple zeta values}
Kaneko and Zagier \cite{KZ19} introduced the finite multiple zeta values (FMZVs). 
Set $\mathcal{A}:=\prod_p\mathbb{F}_p/\bigoplus_p\mathbb{F}_p$, where $p$ runs over all primes.
For positive integers $k_1,\dots,k_r$, the FMZVs are defined by
\begin{align*}
 \zeta_\mathcal{A}(k_1,\dots,k_r)
 :=\Biggl(\sum_{1\le m_1<\dots<m_r<p}\frac{1}{m_1^{k_1}\dotsm m_r^{k_r}}\bmod p\Biggr)_p\in\mathcal{A}.
\end{align*}
In this section, we prove an analog of the double Ohno relation for FMZVs.
To state our main theorem, we need another duality due to Hoffman:
\begin{defn}[Hoffman's dual index]
For an index 
 \[
  \boldsymbol{k}=(\underbrace{1,\ldots,1}_{a_1-1},b_1+1,\dots,\underbrace{1,\ldots,1}_{a_{l-1}-1},b_{l-1}+1,\underbrace{1,\ldots,1}_{a_l-1},b_l) \quad (a_p, b_q\ge1),
 \]
we define Hoffman's dual index of $\boldsymbol{k}$ by 
 \[
  \boldsymbol{k}^\vee :=(a_1,\underbrace{1,\ldots,1}_{b_1-1},a_2+1,\underbrace{1,\ldots,1}_{b_2-1},\dots,a_l+1,\underbrace{1,\ldots,1}_{b_l-1}).
 \]
\end{defn}
\begin{thm}[Ohno-type relation; Oyama \cite{Oya15}] \label{ohnoF}
 For a non-empty index $\boldsymbol{k}$ and  a non-negative integer $m$, we have
 \[ 
  \sum_{\left|\boldsymbol{e}\right|=m}
  \zeta_\mathcal{F} (\boldsymbol{k}\oplus\boldsymbol{e})
  =\sum_{\left|\boldsymbol{e}\right|=m}
  \zeta_\mathcal{F} ((\boldsymbol{k}^\vee \oplus\boldsymbol{e})^\vee ).
 \]
\end{thm}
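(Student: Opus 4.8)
The plan is to translate the statement into the word algebra of Section~2 and reduce it to a single operator identity modulo the kernel of the evaluation map. Writing $I(\boldsymbol{k})=yx^{k_1-1}\cdots yx^{k_r-1}$ and letting $Z_\mathcal{F}\colon y\mathfrak{H}\to\mathcal{A}$ denote the $\mathbb{Q}$-linear map $I(\boldsymbol{k})\mapsto\zeta_\mathcal{F}(\boldsymbol{k})$ (no admissibility is needed, since FMZVs converge for every index), the Ohno shift is again $\sum_{|\boldsymbol{e}|=m}I(\boldsymbol{k}\oplus\boldsymbol{e})=\sigma_m(I(\boldsymbol{k}))$. The first point is that Hoffman's dual has an equally simple description on words: since $\boldsymbol{k}\mapsto\boldsymbol{k}^\vee$ is the conjugation of the underlying composition, in the encoding $y\leftrightarrow 1$, $x\leftrightarrow 0$ it fixes the leading letter $y$ and interchanges $x$ and $y$ in all later letters. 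Thus, with $\rho$ the non-reversing automorphism of $\mathfrak{H}$ defined by $\rho(x)=y$, $\rho(y)=x$, and $H:=L_y\,\rho\,L_y^{-1}$ on $y\mathfrak{H}$, one has $I(\boldsymbol{k}^\vee)=H(I(\boldsymbol{k}))$, and both $\sigma_m$ and $H$ preserve $y\mathfrak{H}$. Since applying $\sigma_m$ to $H(w)$ and then $H$ again yields $\sum_{|\boldsymbol{e}|=m}I((\boldsymbol{k}^\vee\oplus\boldsymbol{e})^\vee)$, Theorem~\ref{ohnoF} is equivalent to
\[
 \big(\sigma_m-H\sigma_m H\big)(w)\in\ker Z_\mathcal{F}\qquad(w\in y\mathfrak{H}).
\]

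So the task is to show $\sigma_m\equiv H\sigma_m H\pmod{\ker Z_\mathcal{F}}$. I would first record the structural symmetries of $Z_\mathcal{F}$: the reversal relation $\zeta_\mathcal{F}(k_1,\ldots,k_r)=(-1)^{k_1+\cdots+k_r}\zeta_\mathcal{F}(k_r,\ldots,k_1)$ (coming from $m_i\mapsto p-m_{r+1-i}$), which on words reads $Z_\mathcal{F}=Z_\mathcal{F}\circ(\mathrm{sgn}\cdot\mathrm{rev})$ with $\mathrm{sgn}\colon w\mapsto(-1)^{\deg w}w$ and $\mathrm{rev}=S_yT$ (using the cyclic shift $S_y$ from the proof of Lemma~\ref{lem}), together with the harmonic (quasi-shuffle) product relations. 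The point to keep in mind is that neither reversal nor duality alone suffices: already for $w=I(1,2)$, $m=1$ the identity amounts to the genuine weight-$4$ relation $\zeta_\mathcal{F}(2,2)+\zeta_\mathcal{F}(1,3)=\zeta_\mathcal{F}(1,1,2)+\zeta_\mathcal{F}(1,2,1)$, which mixes depths $2$ and $3$ and is not produced by the reversal symmetry by itself.

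For that reason my primary line of attack would be the connected-sum method rather than a direct operator manipulation. Working modulo a prime $p$, I would introduce a connected double sum over $0<a_1<\cdots<a_r<p$ and $0<b_1<\cdots<b_s<p$ with a suitable $p$-adic connector $C(a_r,b_s)$ (a finite analogue of the binomial connector), and prove transport relations converting an ``increment'' move on one factor into a ``new-variable'' move on the other. Such a move flips a single bit of the underlying composition, i.e.\ it realizes the conjugation defining $\vee$, while the total Ohno weight $m$ is tracked by a generating variable. Starting from the configuration computing $\sum_{|\boldsymbol{e}|=m}\zeta_\mathcal{F}(\boldsymbol{k}\oplus\boldsymbol{e})$ and transporting all letters to the opposite factor would then land on $\sum_{|\boldsymbol{e}|=m}\zeta_\mathcal{F}((\boldsymbol{k}^\vee\oplus\boldsymbol{e})^\vee)$, giving the theorem. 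As a fallback consistent with the machinery of Section~2, one can instead seek a formal identity in $\widehat{\mathfrak{H}}[[t]]$ in the spirit of Lemma~\ref{lem}, rewriting $H\sigma H$ through $\sigma$, $\mathrm{rev}$, $\mathrm{sgn}$ and the shifts $S_x,S_y$ and verifying it on the generators $xy,yx$.

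The main obstacle is the same in either route: producing the depth-mixing that the bare symmetries do not. In the connected-sum approach it takes the concrete form of constructing the correct finite connector $C$ and controlling the boundary contributions at the cut-off $a_r,b_s<p$, so that the transport relations close up exactly modulo $p$ (hence in $\mathcal{A}$, and identically in the symmetric realization). In the operator approach the difficulty is that $H$ fixes only the leading $y$ and performs no reversal, so it does not conjugate $\sigma$ by the clean rule $T\sigma T=S_y^{-1}\sigma S_y$ used for classical Ohno; commuting $H$ past $\sigma$ generates both cyclic shifts and reversal signs, and one must then show that the residual lies in $\ker Z_\mathcal{F}$ by invoking the harmonic relations, with all the boundary-letter and sign bookkeeping made to match simultaneously for the finite and symmetric realizations.
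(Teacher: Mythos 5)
First, a point of reference: the paper does not prove this statement at all --- it is quoted as a known result of Oyama \cite{Oya15} and used as the input to Lemma \ref{sakki} in the FMZV section. So there is no internal proof to compare your attempt against; what matters is whether your argument would stand on its own.

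Your algebraic reformulation is correct and well set up: with $H=L_y\rho L_y^{-1}$ (where $\rho$ is the letter-swapping automorphism) one indeed has $I(\boldsymbol{k}^\vee)=H(I(\boldsymbol{k}))$, the desired identity becomes $(\sigma_m-H\sigma_mH)(w)\in\ker Z_\mathcal{F}$ on $y\mathfrak{H}$, and your weight-$4$ sanity check $\zeta_\mathcal{F}(2,2)+\zeta_\mathcal{F}(1,3)=\zeta_\mathcal{F}(1,1,2)+\zeta_\mathcal{F}(1,2,1)$ is the right instance. The problem is that everything after this reduction is a research plan rather than a proof. The connected-sum route requires you to actually exhibit the finite connector $C(a_r,b_s)$, state and prove the transport relations, and show that the boundary terms at the cutoff $<p$ vanish modulo $p$; none of this is done, and you yourself flag it as ``the main obstacle.'' The fallback operator route is in the same state: you observe (correctly) that $H$ does not conjugate $\sigma$ by the clean rule $T\sigma T=S_y^{-1}\sigma S_y$ that drives Lemma \ref{lem}, and that the residual must be absorbed by harmonic-product relations, but you do not identify that residual or prove it lies in $\ker Z_\mathcal{F}$. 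Since the depth-mixing phenomenon you point out is precisely the nontrivial content of the theorem, the proposal as written has no proof of the statement. (For the record, Oyama's published argument goes through Hoffman's duality $\zeta^\star_\mathcal{A}(\boldsymbol{k})=-\zeta^\star_\mathcal{A}(\boldsymbol{k}^\vee)$ for the star values together with a star/non-star translation, rather than either of your two routes; a connected-sum proof does exist in later literature, so your plan is plausible, but plausibility is not execution.)
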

For an admissible index $\boldsymbol{k}=(k_1\dots,k_{r})$, we write $\boldsymbol{k}^-:=(k_1\dots,k_{r-1},k_r-1)$.
\begin{thm}[Double Ohno relation for FMZVs] 
 Let $d$ and $n_0,\dots,n_{2d}$ be non-negative integers, and $\boldsymbol{k}$ the index
 \[
  (\{2\}^{n_0},1,\{2\}^{n_1},3,\dots,\{2\}^{n_{2d-2}},1,\{2\}^{n_{2d-1}},3,\{2\}^{n_{2d}}).
 \]
 Then, we have
 \[
  \sum_{\left|\boldsymbol{e}_1\right|=m_1} 
  \sum_{\left|\boldsymbol{e}_2\right|=m_2}
  \zeta_{\mathcal{F}} (\boldsymbol{k}^-\oplus\boldsymbol{e}_1\oplus\boldsymbol{e}_2) 
  =\sum_{\left|\boldsymbol{e}_1\right|=m_1} 
  \sum_{\left|\boldsymbol{e}_2\right|=m_2}
  \zeta_{\mathcal{F}} (((\boldsymbol{k}^-)^\vee \oplus\boldsymbol{e}_1\oplus\boldsymbol{e}_2)^\vee) 
 \]  
 for non-negative integers $m_1$ and $m_2$.
\end{thm}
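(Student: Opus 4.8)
The plan is to mirror the structure of the proof of Theorem \ref{main1b} exactly, replacing the real-valued evaluation map $\mathit{Z}$ and Ohno's relation (Theorem \ref{ohno}) with their finite analogues. The crucial observation is that Lemma \ref{sakki} is stated for an arbitrary $\mathbb{Q}$-vector space $V$ and an arbitrary $\mathbb{Q}$-linear map $\widetilde{\mathit{Z}}\colon y\mathfrak{H}x\to V$; its hypothesis is a one-variable Ohno-type relation expressed through the operator $\sigma_m$. So the entire task reduces to setting up the correct algebraic evaluation map for FMZVs and checking that Oyama's relation (Theorem \ref{ohnoF}) supplies precisely the input that Lemma \ref{sakki}(ii) demands.

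First I would fix the algebraic translation of Oyama's relation. Define the $\mathbb{Q}$-linear map $\mathit{Z}_{\mathcal{F}}\colon \mathfrak{H}\to\mathcal{A}$ (or to the relevant $\mathcal{F}$-ring) sending $I(k_1,\dots,k_r)$ to $\zeta_{\mathcal{F}}(k_1,\dots,k_r)$, where $I$ is as in the excerpt; unlike the convergent case, admissibility is not required, so $\mathit{Z}_{\mathcal{F}}$ is defined on all of $\mathfrak{H}$, and in particular on $y\mathfrak{H}x$. The Hoffman-dual operation $\boldsymbol{k}\mapsto\boldsymbol{k}^\vee$ corresponds to the anti-automorphism $\tau$ on the level of words, so that $\mathit{Z}_{\mathcal{F}}\circ\tau$ computes $\zeta_{\mathcal{F}}$ of the Hoffman dual. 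I would record $\sum_{|\boldsymbol{e}|=m}I(\boldsymbol{k}\oplus\boldsymbol{e})=\sigma_m(I(\boldsymbol{k}))$, exactly as in the convergent setting. With these identifications, Oyama's relation $\sum_{|\boldsymbol{e}|=m}\zeta_{\mathcal{F}}(\boldsymbol{k}\oplus\boldsymbol{e})=\sum_{|\boldsymbol{e}|=m}\zeta_{\mathcal{F}}((\boldsymbol{k}^\vee\oplus\boldsymbol{e})^\vee)$ should rewrite as $\mathit{Z}_{\mathcal{F}}(\sigma_m(w))=\mathit{Z}_{\mathcal{F}}(\tau\sigma_m\tau(w))$ for all $w\in y\mathfrak{H}x$ and all $m\ge0$; equivalently, $(\sigma_m-\tau\sigma_m\tau)(w)\in\ker\mathit{Z}_{\mathcal{F}}$. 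This is the hypothesis of Lemma \ref{sakki}(i), and taking $m=0$ gives $\mathit{Z}_{\mathcal{F}}=\mathit{Z}_{\mathcal{F}}\circ\tau$ on $y\mathfrak{H}x$, which upgrades it to the hypothesis of Lemma \ref{sakki}(ii). Applying the conclusion of Lemma \ref{sakki}(ii) with $\widetilde{\mathit{Z}}=\mathit{Z}_{\mathcal{F}}$ yields, for $w\in y\,\mathbb{Q}\langle xy,yx\rangle x$,
\[
 (\sigma_{m_1}\sigma_{m_2}-\sigma_{m_1}\sigma_{m_2}\tau)(w)\in\ker\mathit{Z}_{\mathcal{F}},
\]
which is the desired double Ohno relation once translated back into index notation.

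The last step is to match the word $w$ and the operator $\tau$ against the index appearing in the statement. Here the $\boldsymbol{k}^-$ and the double Hoffman-dualization $(\,\cdot\,)^\vee$ enter: I would check that the word $I(\boldsymbol{k}^-)$ associated to the truncated index lies in $y\,\mathbb{Q}\langle xy,yx\rangle x$ (the operation $\boldsymbol{k}\mapsto\boldsymbol{k}^-$ is exactly what strips the final $x^{k_r-1}$ down so that the word ends in $yx$ rather than $yx^{k_r}$, landing it in the $\langle xy,yx\rangle$ block algebra), and that $\tau$ on this word realizes the combined operation $\bigl((\boldsymbol{k}^-)^\vee\oplus\cdots\bigr)^\vee$ on indices after the $\sigma$'s have acted. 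Concretely, $\sigma_{m_1}\sigma_{m_2}\tau(w)$ should decode as $\sum\sum\zeta_{\mathcal{F}}(((\boldsymbol{k}^-)^\vee\oplus\boldsymbol{e}_1\oplus\boldsymbol{e}_2)^\vee)$, using that $\tau$ intertwines $\sigma_m$ with its Hoffman-dual incarnation.

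The main obstacle will be this bookkeeping in the final step rather than any new structural idea: verifying that conjugation by $\tau$ correctly interchanges the Ohno summation $\sigma_m$ on $\boldsymbol{k}^-$ with the Hoffman-dual Ohno summation appearing on the right-hand side, and confirming that the $\boldsymbol{k}^-$ truncation is exactly the device that places the relevant word inside $y\,\mathbb{Q}\langle xy,yx\rangle x$ so that Lemma \ref{sakki} applies. Once the dictionary between $(\boldsymbol{k}^\vee\oplus\boldsymbol{e})^\vee$ on the FMZV side and $\tau\sigma_m\tau$ on the algebra side is pinned down — which is precisely the content of rephrasing Oyama's relation as a kernel statement for $\mathit{Z}_{\mathcal{F}}$ — the proof is a verbatim repetition of the arguments already given for Theorem \ref{main1b}.
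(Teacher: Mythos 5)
Your overall strategy --- feed Oyama's relation into Lemma \ref{sakki} via a suitable evaluation map --- is the paper's, but two linked errors in the execution would derail the argument. First, the claimed upgrade from hypothesis (i) to hypothesis (ii) of Lemma \ref{sakki} is unjustified: setting $m=0$ in $(\sigma_m-\tau\sigma_m\tau)(w)\in\ker\mathit{Z}_{\mathcal{F}}$ gives $(\mathrm{id}-\tau\circ\mathrm{id}\circ\tau)(w)=0$, which is vacuous; it does \emph{not} yield $\mathit{Z}_{\mathcal{F}}=\mathit{Z}_{\mathcal{F}}\circ\tau$. (That duality is what the $m=0$ case of hypothesis (ii) would give, but FMZVs do not satisfy it --- there is no analogue of $\mathcal{O}_0(\boldsymbol{k})=\mathcal{O}_0(\boldsymbol{k}^\dagger)$ in $\mathcal{A}$.) As a result you invoke the wrong half of the lemma: what the theorem needs is $(\sigma_{m_1}\sigma_{m_2}-\tau\sigma_{m_1}\sigma_{m_2}\tau)(I(\boldsymbol{k}))\in\ker\mathit{Z}_{\mathcal{F}}$, i.e.\ the conclusion of part (i), because the right-hand side of the theorem decodes as $\mathit{Z}_{\mathcal{F}}(\tau\sigma_{m_1}\sigma_{m_2}\tau(I(\boldsymbol{k})))$ with an outer $\tau$, not as $\mathit{Z}_{\mathcal{F}}(\sigma_{m_1}\sigma_{m_2}\tau(I(\boldsymbol{k})))$. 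Part (i) applies directly once its hypothesis is verified, so the detour through (ii) is both incorrect and unnecessary.

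Second, your dictionary between word operations and dualities is off, and this is where the real content lies. The anti-automorphism $\tau$ realizes the ordinary dual $\dagger$ on admissible words, not Hoffman's dual $\vee$ (for instance $\tau(yxy)=xyx$ encodes $(2,1)^\dagger=(1,2)$ via the admissible encoding, whereas the word operation realizing $\vee$ fixes the leading $y$ and swaps letters in the remainder). The paper's device is to define $\mathit{Z}_{\mathcal{F}}(yx^{k_1-1}\cdots yx^{k_r-1}):=\zeta_{\mathcal{F}}((k_1,\ldots,k_r)^-)$ on $y\mathfrak{H}x$, so that the $(\cdot)^-$ lives inside the evaluation map; the word fed to Lemma \ref{sakki} is $I(\boldsymbol{k})$ for the untruncated $\boldsymbol{k}$, which already lies in $y\mathbb{Q}\langle xy,yx\rangle x$. (Your suggestion that $I(\boldsymbol{k}^-)$ is the relevant word is also problematic: $I(\boldsymbol{k}^-)$ ends in $yx^{k_r-2}$ and need not even lie in $y\mathfrak{H}x$.) One then needs the combinatorial identity $((\boldsymbol{l}^\dagger\oplus\boldsymbol{e})^\dagger)^-=((\boldsymbol{l}^-)^\vee\oplus\boldsymbol{e})^\vee$ to see that $\mathit{Z}_{\mathcal{F}}\circ\tau\sigma_m\tau$ computes the Hoffman-dualized Ohno sum, which is exactly what turns Theorem \ref{ohnoF} into the hypothesis of Lemma \ref{sakki}(i). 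That identity is the step you defer to ``bookkeeping,'' and with $\vee$ misidentified as $\tau$ the translation does not go through.
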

\begin{proof}
 We define a $\mathbb{Q}$-linear map $\mathit{Z}_{\mathcal{F}}\colon y\mathfrak{H}x\rightarrow\mathcal{A}$ 
 by $\mathit{Z}_{\mathcal{F}}( yx^{k_1-1}\cdots yx^{k_r-1}):= \zeta_{\mathcal{F}} ((k_1,\ldots, k_r)^-)$. 
 We first note that Theorem \ref{ohnoF} is equivalent to $(\sigma_m-\tau\sigma_m\tau)(w) \in\ker\mathit{Z}_\mathcal{F}$ for all $m\in\mathbb{Z}_{\ge0}$ and $w\in y\mathfrak{H}x$, 
 since 
 \begin{align*}
  \mathit{Z}_\mathcal{F} (\sigma_m(I(\boldsymbol{l})))
  &=\sum_{|\boldsymbol{e}|=m} \zeta_\mathcal{F} ((\boldsymbol{l}\oplus \boldsymbol{e})^-) \\
  &=\sum_{|\boldsymbol{e}|=m} \zeta_\mathcal{F} (\boldsymbol{l}^-\oplus \boldsymbol{e})
 \end{align*}
 and 
 \begin{align*}
  \mathit{Z}_\mathcal{F} (\tau \sigma_m \tau (I(\boldsymbol{l})))
  &=\sum_{|\boldsymbol{e}|=m} 
   \zeta_\mathcal{F} (((\boldsymbol{l}^\dagger \oplus \boldsymbol{e})^\dagger)^-) \\
  &=\sum_{|\boldsymbol{e}|=m} 
   \zeta_\mathcal{F} (((\boldsymbol{l}^-)^\vee \oplus \boldsymbol{e})^\vee)
 \end{align*}
 for an admissible index $\boldsymbol{l}$.
 Since $I(\boldsymbol{k}) \in y\mathbb{Q}\langle xy,yx\rangle x$, we find
 \[
  (\sigma_{m_1}\sigma_{m_2}-\tau\sigma_{m_1}\sigma_{m_2}\tau)(I(\boldsymbol{k})) \in\ker{\mathit{Z}_\mathcal{F}} 
 \] 
 by Lemma \ref{sakki} (i). 
 This completes the proof since $\mathit{Z}_\mathcal{F} (\sigma_{m_1}\sigma_{m_2} (I(\boldsymbol{k})))$
 and $\mathit{Z}_\mathcal{F} (\tau \sigma_{m_1}\sigma_{m_2} \tau (I(\boldsymbol{k})))$
 give the left-hand side and the right-hand side of the theorem, respectively, by a similar argument 
 as above. 
\end{proof}

%%%%%%%%%%%%%%%%%%%%%%%%%%%%%%%%%%%%%%%%%%%%%%%%%%%%%%%%%%%%%%%%%%%%%%%%%%%%%%%%%%%%%%%%%%%%%%%%%%%%
\section{Proof of Theorem \ref{main2}}
%%%%%%%%%%%%%%%%%%%%%%%%%%%%%%%%%%%%%%%%%%%%%%%%%%%%%%%%%%%%%%%%%%%%%%%%%%%%%%%%%%%%%%%%%%%%%%%%%%%%
In this section, we prove Theorem \ref{main2}. 
\begin{defn}
 For an admissible index $\boldsymbol{k}$, positive integers $s,t\ge2$, and a non-negative integer $m$, we define
 \begin{align*}
  F_m(s;\boldsymbol{k})
  &:=\mathcal{O}_m((s)\sha \boldsymbol{k})-\mathcal{O}_m((s)\sha \boldsymbol{k}^\dagger),\\
  D_m(s,t)
  &:=F_m(s;(t+1))-F_m(t;(s+1)). \\
 \end{align*}
\end{defn}
By definition, Theorem \ref{main2} is equivalent to $D_m(s,t)=0$, and we prove Theorem \ref{main2} in this form. 
In what follows, we use the special case of  the harmonic product formula  $\zeta(k)\zeta(\boldsymbol{l})=\zeta((k) \ast \boldsymbol{l})$, 
where
\begin{align*}
 (k) \ast \boldsymbol{l}
 =(k) \sha \boldsymbol{l} +\sum_{i=1}^r (l_1,\dots,l_{i-1},l_i+k,l_{i+1},\dots,l_r)
\end{align*}
is the harmonic product of $(k)$ and $\boldsymbol{l}=(l_1,\dots,l_r)$.  
\begin{lem} \label{Fm} 
 For integers $s\ge2$, $t\ge1$, and $m\ge0$, 
 \begin{align*} 
  F_m(s;(t+1))
  &=-(m+1)\zeta(s+t+1+m) \\
  &\quad +\sum_{m_1+m_2=m}\sum_{i=0}^{t-2}\mathcal{O}_{m_2}(\{1\}^i,s+m_1+1,\{1\}^{t-2-i},2)\\
  &\quad +\sum_{m_1+m_2=m}\mathcal{O}_{m_2}(\{1\}^{t-1},s+m_1+2).
 \end{align*}
\end{lem}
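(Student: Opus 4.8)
\textbf{Proof proposal for Lemma \ref{Fm}.}

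The plan is to expand the definition $F_m(s;(t+1))=\mathcal{O}_m((s)\sha(t+1))-\mathcal{O}_m((s)\sha(t+1)^\dagger)$ and reduce both shuffle products to a manageable combination of Ohno sums. The key observation is that $(t+1)^\dagger=(\{1\}^{t-1},2)$, so the second shuffle $(s)\sha(\{1\}^{t-1},2)$ expands into a sum over all $t$ positions where the single letter $(s)$ can be inserted among the $t$ components of $(\{1\}^{t-1},2)$. First I would write out both shuffle sums explicitly: $(s)\sha(t+1)=(s,t+1)+(t+1,s)$, a two-term sum, while $(s)\sha(\{1\}^{t-1},2)$ produces the $t$ indices obtained by slotting $(s)$ into each of the $t+1$ gaps—though several of these coincide in shape with the terms that will appear on the right-hand side of the claimed formula. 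The goal is to show that after applying $\mathcal{O}_m$ and collecting terms, the difference telescopes into the stated expression.

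The main technical device will be the harmonic product relation $\zeta(k)\zeta(\boldsymbol{l})=\zeta((k)\ast\boldsymbol{l})$ recalled just before the lemma, which lets me trade the shuffle $(s)\sha\boldsymbol{l}$ for the harmonic product $(s)\ast\boldsymbol{l}$ minus the ``stuffle'' correction terms $\sum_i(l_1,\dots,l_i+s,\dots,l_r)$. Since $\mathcal{O}_m$ is linear and compatible with the Ohno-sum structure, I would apply $\mathcal{O}_m$ termwise and use the fact that $\mathcal{O}_m$ distributes over the index-addition $\oplus\boldsymbol{e}$ in the defining sum. The generating-function viewpoint $\mathcal{O}(\boldsymbol{k})=\sum_m\mathcal{O}_m(\boldsymbol{k})X^m$ may be cleaner: I would compute $F(s;(t+1))$ as a power series, then read off the coefficient of $X^m$. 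The diagonal convolution $\sum_{m_1+m_2=m}$ appearing in the statement strongly suggests that one factor of $\mathcal{O}$ comes from a product of two Ohno sums (or an Ohno sum times a single zeta value $\zeta(s+m_1+\cdots)$), so I expect the harmonic product to split the weight between an $(s)$-type factor carrying $m_1$ and a remaining index carrying $m_2$.

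Concretely, I anticipate the steps in the following order. (1) Replace each shuffle product by the harmonic product plus stuffle corrections, using $\zeta(s)\zeta(\boldsymbol{l})=\zeta((s)\ast\boldsymbol{l})$; the two harmonic products $(s)\ast(t+1)$ and $(s)\ast(\{1\}^{t-1},2)$ will cancel against each other because of Ohno's relation $\mathcal{O}(t+1)=\mathcal{O}((t+1)^\dagger)$, leaving only the stuffle corrections. (2) Identify the stuffle correction of $(s)\sha(t+1)$: inserting $s$ additively gives $\zeta(s+t+1)$ twice, which after applying $\mathcal{O}_m$ and accounting for the depth-one Ohno expansion produces the term $-(m+1)\zeta(s+t+1+m)$. (3) Identify the stuffle corrections of $(s)\sha(\{1\}^{t-1},2)$: adding $s$ to the $i$-th component of $(\{1\}^{t-1},2)$ yields indices of the form $(\{1\}^i,s+1,\{1\}^{t-2-i},2)$ for $0\le i\le t-2$ and the boundary index $(\{1\}^{t-1},s+2)$; applying $\mathcal{O}_m$ and splitting the Ohno weight $m=m_1+m_2$ between the distinguished $(s+\cdot)$ slot and the rest gives exactly the two sums in the statement.

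The step I expect to be the main obstacle is the careful bookkeeping in (2) and (3): tracking how $\mathcal{O}_m$ interacts with the harmonic-product expansion and verifying that the Ohno-weight $m$ distributes as a convolution $m_1+m_2=m$, with $m_1$ loading onto the component that received the additive $+s$. In particular, the coefficient $(m+1)$ in the leading term and the precise index shapes $(\{1\}^i,s+m_1+1,\dots)$ require that I correctly match the definition $\mathcal{O}_m(\boldsymbol{k})=\sum_{|\boldsymbol{e}|=m}\zeta(\boldsymbol{k}\oplus\boldsymbol{e})$ against the depth-one Ohno expansion $\mathcal{O}_m(s+t+1)=\zeta(s+t+1+m)$ paired with the multiplicity from the product structure. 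I would check the small cases $t=1$ (where the first sum over $i$ is empty) and $m=0$ as sanity tests before committing to the general computation.
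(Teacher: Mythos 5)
Your plan is essentially the paper's proof: the paper computes $\sum_{m_1+m_2=m}\mathcal{O}_{m_1}(s)\mathcal{O}_{m_2}(t+1)$ and $\sum_{m_1+m_2=m}\mathcal{O}_{m_1}(s)\mathcal{O}_{m_2}((t+1)^\dagger)$ via the harmonic product, equates them using Ohno's relation for the depth-one index $(t+1)$, and reads the lemma off from the stuffle correction terms, exactly as you outline. The one slip is the remark that the stuffle correction of $(s+m_1)\ast(t+1+m_2)$ produces $\zeta(s+t+1+m)$ \emph{twice} --- it appears once per pair $(m_1,m_2)$, and the coefficient $m+1$ is just the number of such pairs, which is what you in fact assert at the end.
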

\begin{proof}
 By definition, we have
 \begin{align*} 
  F_m(s;(t+1))
  =\mathcal{O}_m((s)\sha (t+1))-\mathcal{O}_m((s)\sha (t+1)^\dagger).
 \end{align*}
 By the harmonic product formula, 
 \begin{align} \label{AAA}
 \begin{split}
  &\sum_{m_1+m_2=m}\mathcal{O}_{m_1}(s)\mathcal{O}_{m_2}(t+1)\\
  &=\sum_{m_1+m_2=m}\zeta((s+m_1)\ast(t+1+m_2))\\
  &=\sum_{m_1+m_2=m}\left\{\zeta((s+m_1)\sha(t+1+m_2))+\zeta(s+t+1+m_1+m_2) \right\} \\
  &=\mathcal{O}_m((s)\sha (t+1))+(m+1)\zeta(s+t+1+m).
 \end{split}
 \end{align}
 Since
 \begin{align*}
  (s+m_1)*((t+1)^\dagger\oplus \boldsymbol{e})
  &=(s+m_1)\sha((t+1)^\dagger\oplus \boldsymbol{e}) \\
  &\quad+\sum_{i=0}^{t-2}(\{1\}^i,s+m_1+1,\{1\}^{t-2-i},2)\oplus \boldsymbol{e}\\
  &\quad+(\{1\}^{t-1},s+m_1+2)\oplus \boldsymbol{e}
 \end{align*}
 for $m_1\ge0$ and $\boldsymbol{e}\in\mathbb{Z}_{\ge0}^t$, 
 again by the harmonic product formula, 
 we have
 \begin{align} \label{BBB}
 \begin{split}
  \sum_{m_1+m_2=m}\mathcal{O}_{m_1}(s)\mathcal{O}_{m_2}((t+1)^\dagger)
  &=\sum_{m_1+m_2=m}\sum_{|\boldsymbol{e}|=m_2}\zeta((s+m_1)*((t+1)^\dagger\oplus \boldsymbol{e}))\\
  &=\mathcal{O}_m((s)\sha (t+1)^\dagger)\\
  &\quad+\sum_{m_1+m_2=m}\sum_{i=0}^{t-2}\mathcal{O}_{m_2}(\{1\}^i,s+m_1+1,\{1\}^{t-2-i},2)\\
  &\quad+\sum_{m_1+m_2=m}\mathcal{O}_{m_2}(\{1\}^{t-1},s+m_1+2).
 \end{split}
 \end{align}
 Since $\mathcal{O}_{m_2}((t+1)^\dagger)=\mathcal{O}_{m_2}(t+1)$, we have
 \begin{align} \label{CCC}
  \sum_{m_1+m_2=m}\mathcal{O}_{m_1}(s)\mathcal{O}_{m_2}((t+1)^\dagger)
  =\sum_{m_1+m_2=m}\mathcal{O}_{m_1}(s)\mathcal{O}_{m_2}(t+1).
 \end{align}
 Combining (\ref{AAA}), (\ref{BBB}), and (\ref{CCC}), we obtain the lemma. 
\end{proof}
\begin{lem}
 For positive integers $s,t\ge3$ and $m\ge1$, 
 \begin{align*}
  D_{m-1}(s,t)=D_m(s-1,t)+D_m(s,t-1).
 \end{align*}
\end{lem}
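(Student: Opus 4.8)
The plan is to substitute the explicit formula of Lemma~\ref{Fm} into the three quantities in the claim and reduce everything to two applications of Ohno's relation $\mathcal O_m(\boldsymbol k)=\mathcal O_m(\boldsymbol k^\dagger)$. Write $H_m(s,t):=F_m(s;(t+1))+(m+1)\zeta(s+t+1+m)$ for the Ohno-sum part of Lemma~\ref{Fm}. Since the pure $\zeta$-terms are symmetric in $s,t$, they cancel in $D_m(s,t)=F_m(s;(t+1))-F_m(t;(s+1))$, giving $D_m(s,t)=H_m(s,t)-H_m(t,s)$. Substituting this into $D_{m-1}(s,t)-D_m(s-1,t)-D_m(s,t-1)$ and regrouping, one finds that this difference equals $\Delta(s,t)-\Delta(t,s)$, where $\Delta(s,t):=H_{m-1}(s,t)-H_m(s-1,t)-H_m(s,t-1)$. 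Thus the claim is equivalent to the assertion that $\Delta$ is symmetric in $s,t$.

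First I would handle the mixed levels $m$ and $m-1$ by a one-line \emph{box-stripping} identity: splitting off the $m_1=0$ term and reindexing $m_1\mapsto m_1+1$ in the sum $\sum_{m_1+m_2=m}$ of Lemma~\ref{Fm} yields $H_{m-1}(s,t)=H_m(s-1,t)-B_1(s,t)$, where $B_1(s,t):=\sum_{g=0}^{t-2}\mathcal O_m(\{1\}^{g},s,\{1\}^{t-2-g},2)+\mathcal O_m(\{1\}^{t-1},s+1)$ is $\mathcal O_m$ applied to the terms obtained by adding $s-1$ to one entry of $(t+1)^\dagger=(\{1\}^{t-1},2)$ at a time. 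This reduces the target to the symmetry of $B_1(s,t)+H_m(s,t-1)$. Unwinding the definitions of $H_m$ and $F_m$, one has $H_m(s,t-1)=\mathcal O_m(s,t)+\mathcal O_m(t,s)+(m+1)\zeta(s+t+m)-\mathcal O_m\big((s)\sha(t)^\dagger\big)$, whose first three summands are symmetric; since $(t)^\dagger=(\{1\}^{t-2},2)$, it remains only to prove that
\[
 B_1(s,t)-\mathcal O_m\big((s)\sha(\{1\}^{t-2},2)\big)
\]
is symmetric in $s,t$.

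The key observation is that this difference telescopes. Expanding the index shuffle as $\mathcal O_m((s)\sha(\{1\}^{t-2},2))=\sum_{g=0}^{t-2}\mathcal O_m(\{1\}^{g},s,\{1\}^{t-2-g},2)+\mathcal O_m(\{1\}^{t-2},2,s)$, every summand of $B_1(s,t)$ indexed by $g$ cancels, and we are left with
\[
 \mathcal O_m(\{1\}^{t-1},s+1)-\mathcal O_m(\{1\}^{t-2},2,s).
\]
Ohno's relation now finishes the proof: a direct computation of dual indices gives $(\{1\}^{t-1},s+1)^\dagger=(\{1\}^{s-1},t+1)$ and $(\{1\}^{t-2},2,s)^\dagger=(\{1\}^{s-2},2,t)$, so after applying $\mathcal O_m(\boldsymbol k)=\mathcal O_m(\boldsymbol k^\dagger)$ each of the two terms is individually invariant under $s\leftrightarrow t$. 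Hence the difference is symmetric, and the recurrence follows.

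The main obstacle I anticipate is organizational rather than conceptual: once the reductions are in place the whole statement rests on the two duality identities above, but one must carry the $\sum_{m_1+m_2=m}$ conventions faithfully through the box-stripping step so that the telescoping is exact, and verify that the admissibility ranges (guaranteed by $s,t\ge 3$) are respected when the duals are formed. I would also double-check the regrouping that produces $\Delta(s,t)-\Delta(t,s)$ in the first paragraph, since a sign slip there would propagate through the entire argument.
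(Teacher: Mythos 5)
Your proof is correct and follows essentially the same route as the paper's: both rest on Lemma~\ref{Fm}, the index shift $m_1\mapsto m_1+1$ relating levels $m$ and $m-1$, the recognition of $\sum_{g}(\{1\}^{g},s,\{1\}^{t-2-g},2)$ as $(s)\sha(t)^\dagger$ minus the term $(\{1\}^{t-2},2,s)$, and the two Ohno dualities $(\{1\}^{t-1},s+1)^\dagger=(\{1\}^{s-1},t+1)$ and $(\{1\}^{t-2},2,s)^\dagger=(\{1\}^{s-2},2,t)$. The only difference is organizational: you symmetrize first and reduce to the symmetry of $\Delta(s,t)$, whereas the paper computes $F_m(s-1;(t+1))-F_{m-1}(s;(t+1))$ directly and cancels at the end.
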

\begin{proof}
 Since
 \begin{align*} 
  &\sum_{i=0}^{t-2} (\{1\}^i,s+m_1+1,\{1\}^{t-2-i},2)\\
  &=(s+m_1+1)\sha (t)^\dagger 
   -(\{1\}^{t-2},2,s+m_1+1), \\
 \end{align*}
 we have
 \begin{align*}
  F_m(s;(t+1))&=-(m+1)\zeta(s+t+1+m)\\
  &\quad+\sum_{m_1+m_2=m} \mathcal{O}_{m_2}((s+m_1+1)\sha (t)^\dagger)\\
  &\quad-\sum_{m_1+m_2=m} \mathcal{O}_{m_2}(\{1\}^{t-2},2,s+m_1+1)\\
  &\quad+\sum_{m_1+m_2=m} \mathcal{O}_{m_2}(\{1\}^{t-1},s+m_1+2)
 \end{align*}
 for $s,t\ge2$ and $m\ge0$ by Lemma \ref{Fm}. 
 Thus we find
 \begin{align*}
  &F_{m}(s-1;(t+1)) -F_{m-1}(s;(t+1))\\
  &=-\zeta(s+t+m)
   +\mathcal{O}_m((s)\sha (t)^\dagger)
   -\mathcal{O}_m(\{1\}^{t-2},2,s)
   +\mathcal{O}_m(\{1\}^{t-1},s+1)
 \end{align*}
 for $s\ge3$, $t\ge2$, and $m\ge1$.
 Therefore, 
 \begin{align*}
  &D_{m-1}(s,t)-D_m(s-1,t)-D_m(s,t-1) \\
  &=-(F_{m}(s-1;(t+1))-F_{m-1}(s;(t+1)))
   +(F_{m}(t-1;(s+1))-F_{m-1}(t;(s+1))) \\
  &\quad +F_{m}(t;(s))-F_{m}(s;(t)) \\
  &=\zeta(s+t+m)
   -\mathcal{O}_m((s)\sha (t)^\dagger)
   +\mathcal{O}_m(\{1\}^{t-2},2,s)
   -\mathcal{O}_m(\{1\}^{t-1},s+1)\\
  &\quad -\zeta(s+t+m)
   +\mathcal{O}_m((t)\sha(s)^\dagger)
   -\mathcal{O}_m(\{1\}^{s-2},2,t)
   +\mathcal{O}_m(\{1\}^{s-1},t+1) \\
  &\quad +F_{m}(t;(s))-F_{m}(s;(t))
 \end{align*}
 for $s,t\ge3$ and $m\ge1$.
 Since 
 $\mathcal{O}_m(\{1\}^{t-2},2,s)=\mathcal{O}_m(\{1\}^{s-2},2,t)$ and
 $\mathcal{O}_m(\{1\}^{t-1},s+1)=\mathcal{O}_m(\{1\}^{s-1},t+1)$
 by Ohno's relation, 
 and $F_{m}(t;(s))-F_{m}(s;(t))=-\mathcal{O}_m((t)\sha(s)^\dagger) +\mathcal{O}_m((s)\sha(t)^\dagger)$
 by the definition of $F_m$, 
 it readily follows that
 \[
  D_{m-1}(s,t)-D_m(s-1,t)-D_m(s,t-1)=0. \qedhere
 \] 
\end{proof}

\begin{proof}[Proof of Theorem \ref{main2}]
 Recall that Theorem \ref{main2} is equivalent to $D_m(s,t)=0$ for $s,t\ge2$ and $m\ge0$. 
 Since 
 \[
  D_n(s,t)
  =D_{n+1}(s-1,t) +D_{n+1}(s,t-1) 
 \]
 for $s,t\ge3$ and $n\ge0$
 by the previous lemma,
 we can reduce the theorem to $D_m(2,t)=0$ and $D_m(s,2)=0$ for $s,t\ge2$ and $m\ge0$. 

 By the symmetry $D_m(s,t)=-D_m(t,s)$, 
 it suffices to show $D_m(s,2)=0$, that is, $F_m(s;(3))-F_m(2;(s+1))=0$. 
 By Lemma \ref{Fm}, we have
 \begin{align*}
  F_m(s;(3))
  &=-(m+1)\zeta(s+m+3)\\
  &\quad+\sum_{m_1+m_2=m}\mathcal{O}_{m_2}(s+m_1+1,2)+\sum_{m_1+m_2=m}\mathcal{O}_{m_1}(1,s+m_2+2)
 \end{align*}
 on one hand. 
 On the other hand, we have
 \begin{align*}
  F_m(2;(s+1))&=\mathcal{O}_m((2)\sha (s+1))-\mathcal{O}_m((2)\sha (\{1\}^{s-1},2))\\
  &=( \mathcal{O}_m(2,s+1)+\mathcal{O}_m(s+1,2) )
   -\biggl( \sum_{\substack{x+y=s+3\\x,y\ge2}} \mathcal{O}_m(x,y)+\mathcal{O}_m(2,s+1) \biggr) \\
  &=-\sum_{\substack{x'+y'=s\\x',y'>0}}\mathcal{O}_m(x'+1,y'+2) \\
  &=-\sum_{\substack{m_1+m_2=m \\m_1,m_2\ge0 }}
   \sum_{\substack{ x'+y'=s \\x',y'>0 }} \zeta(x'+m_1+1,y'+m_2+2)
 \end{align*}
 by Ohno's relation.  
 Since
 \begin{align*}
  \mathcal{O}_{m_1} (1,s+m_2+2)
  &=\sum_{\substack{ a+b=s+m_1+m_2 \\ 0\le a\le m_1,0\le b }} \zeta(a+1,b+2), \\
  \mathcal{O}_{m_2} (s+m_1+1,2)
  &=\sum_{\substack{ a+b=s+m_1+m_2 \\ 0\le a, 0\le b\le m_2 }} \zeta(a+1,b+2), 
 \end{align*}
 and 
 \begin{align*}
  \sum_{\substack{ x'+y'=s \\x',y'>0 }} \zeta(x'+m_1+1,y'+m_2+2)
  =\sum_{\substack{ a+b=s+m_1+m_2 \\ a>m_1,b>m_2 }} \zeta(a+1,b+2), 
 \end{align*}
 we have
 \begin{align*}
  &\mathcal{O}_{m_1} (1,s+m_2+2)
  +\mathcal{O}_{m_2} (s+m_1+1,2) 
  +\sum_{\substack{ x'+y'=s \\x',y'>0 }} \zeta(x'+m_1+1,y'+m_2+2) \\
  &=\sum_{\substack{ a+b=s+m_1+m_2 \\ a\ge0,b\ge0 }} \zeta(a+1,b+2). 
 \end{align*}
 Therefore, 
 \begin{align*}
  &F_m(s;(3))-F_m(2;(s+1)) \\
  &=-(m+1)\zeta(s+m+3)
   +\sum_{\substack{m_1+m_2=m \\m_1,m_2\ge0 }} 
    \sum_{\substack{ a+b=s+m_1+m_2 \\ a\ge0,b\ge0 }} \zeta(a+1,b+2) \\
  &=-(m+1)\zeta(s+m+3)
   +(m+1) \sum_{\substack{ a+b=s+m \\ a\ge0,b\ge0 }} \zeta(a+1,b+2) \\
  &=0. \qedhere
 \end{align*}
\end{proof}

%%%%%%%%%%%%%%%%%%%%%%%%%%%%%%%%%%%%%%%%%%%%%%%%%%%%%%%%%%%%%%%%%%%%%%%%%%%%%%%%%%%%%%%%%%%%%%%%%%%%
\section{Conjectures}
We conclude the paper by stating several families of conjectural relations among Ohno sums discovered by numerical computation. 
\begin{conj}
 For integers $s\ge2$ and $m,n\ge0$, 
 \begin{align*}
  &\mathcal{O}((s)\sha(\{2\}^{m},1,\{2\}^{n+1}))
   -\mathcal{O}((s)\sha(\{2\}^{n},3,\{2\}^{m}))\\
  &=\sum_{\substack{ a+b=s+3 \\ a\ge2, b\ge3}}
    \left(\sum_{\substack{ i+j=m \\ i,j\ge0 }}
    \mathcal{O}(\{2\}^{i},a,\{2\}^{n},b,\{2\}^{j})
   -\sum_{\substack{ i+j=m-1 \\ i,j\ge0 }}
    \mathcal{O}(\{2\}^{i},a,\{2\}^{n+1},b,\{2\}^{j})\right).
 \end{align*}
\end{conj}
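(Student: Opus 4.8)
The plan is to first recognize the left-hand side as $F(s;\boldsymbol{k})$ for the index $\boldsymbol{k}=(\{2\}^m,1,\{2\}^{n+1})$. Exactly as in the word computation of Section~2, one checks that $I(\boldsymbol{k})=(yx)^m\,y\,(yx)^{n+1}$, so $\tau(I(\boldsymbol{k}))=(yx)^n\,yx^2\,(yx)^m$, giving $\boldsymbol{k}^\dagger=(\{2\}^n,3,\{2\}^m)$. Hence the left-hand side is literally $\mathcal{O}((s)\sha\boldsymbol{k})-\mathcal{O}((s)\sha\boldsymbol{k}^\dagger)=F(s;\boldsymbol{k})$. This reframes the conjecture as an explicit evaluation of $F(s;\boldsymbol{k})$, in the same spirit as Lemma~\ref{Fm}, but now for a depth-$(m+n+2)$ argument rather than a single component; as a sanity anchor, both sides carry weight $s+2m+2n+3$.

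Next I would run the harmonic-product reduction used in the proof of Lemma~\ref{Fm}. Working in the power-series ring, I expand $\mathcal{O}(s)\,\mathcal{O}(\boldsymbol{k})$ through $\zeta(s+\cdot)\,\zeta(\boldsymbol{k}\oplus\boldsymbol{e})=\zeta((s+\cdot)\ast(\boldsymbol{k}\oplus\boldsymbol{e}))$, separating the shuffle part, which reassembles into $\mathcal{O}((s)\sha\boldsymbol{k})$, from the stuffle part, which is a sum of Ohno sums obtained by fusing $s$ into each component of $\boldsymbol{k}$ (the combinatorial weight coming from the freedom to split the Ohno excess between $s$ and the fused component). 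Doing the same for $\boldsymbol{k}^\dagger$ and invoking Ohno's relation $\mathcal{O}(\boldsymbol{k})=\mathcal{O}(\boldsymbol{k}^\dagger)$ to equate the two products, I obtain $F(s;\boldsymbol{k})$ as the difference of the two stuffle corrections: the $\boldsymbol{k}^\dagger$-corrections (fusing $s$ into each entry of $(\{2\}^n,3,\{2\}^m)$) minus the $\boldsymbol{k}$-corrections (fusing $s$ into each entry of $(\{2\}^m,1,\{2\}^{n+1})$).

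The third step is to cancel the contributions coming from fusing $s$ into the blocks of $2$'s. Both $\boldsymbol{k}$ and $\boldsymbol{k}^\dagger$ consist of $2$'s together with a single distinguished entry (the $3$ in $\boldsymbol{k}^\dagger$, the $1$ in $\boldsymbol{k}$), and the resulting ``fuse into a $2$'' indices are again of the $\{2\}$-with-one-marked-entry type, so they should match in pairs under Ohno's relation and the $\{2\}$-block dualities. After this cancellation, only the fusions into the distinguished entries survive: the $\boldsymbol{k}^\dagger$-side contributes a term built from $(\{2\}^n,s+3,\{2\}^m)$ and the $\boldsymbol{k}$-side a term built from $(\{2\}^m,s+1,\{2\}^{n+1})$. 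The final step is to expand each surviving marked component into the flanked double-index form of the right-hand side: the component $s+3$ should expand, via a refined Ohno/duality identity, into $\sum_{a+b=s+3}\sum_{i+j=m}\mathcal{O}(\{2\}^i,a,\{2\}^n,b,\{2\}^j)$ (the flanking $\{2\}^n$ migrating to the centre and $\{2\}^m$ splitting outward), while on the $\boldsymbol{k}$-side the component $s+1$, after absorbing one neighbouring $2$ from the left block, becomes $s+3$ and produces $\sum_{a+b=s+3}\sum_{i+j=m-1}\mathcal{O}(\{2\}^i,a,\{2\}^{n+1},b,\{2\}^j)$ with the opposite sign; this accounts simultaneously for the shift $s+1\to s+3$, the loss of one $2$ (so $i+j=m-1$), and the central block $\{2\}^{n+1}$.

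The main obstacle is precisely this last expansion. Turning a single marked component $c$ flanked by blocks of $2$'s into the double sum $\sum_{a+b=c}\mathcal{O}(\dots,a,\{2\}^{\bullet},b,\dots)$, with the correct Ohno-excess bookkeeping and weights, is not a formal manipulation: it is a refined Ohno-type relation whose validity is essentially what the conjecture encodes. A promising alternative is to imitate the architecture of the proof of Theorem~\ref{main2}: introduce the difference $D$ of the two sides and seek a recursion in $(s,m,n)$ — assembled from the harmonic product and Ohno's relation — that reduces the statement to boundary cases such as $m=0$ or $s=2$, where it should follow from Lemma~\ref{Fm} together with the double Ohno relation of Theorem~\ref{main1}. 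Establishing such a recursion, and with it the required $\{2\}$-block cancellations and the marked-component expansion, is where the real difficulty lies, and is the reason the statement remains conjectural.
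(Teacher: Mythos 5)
This statement is Conjecture 4.1 of the paper: the authors give no proof of it, only numerical evidence, so there is no argument of theirs to compare yours against, and your proposal must stand on its own. It does not: as you yourself concede in your final paragraph, the decisive step is missing, so the statement remains unproven. What you do get right is the initial reduction. The identification $I(\boldsymbol{k})=(yx)^m y(yx)^{n+1}$, hence $\boldsymbol{k}^\dagger=(\{2\}^n,3,\{2\}^m)$ and the left-hand side equal to $F(s;\boldsymbol{k})$ for $\boldsymbol{k}=(\{2\}^m,1,\{2\}^{n+1})$, is correct, the weight count checks out, and the harmonic-product computation expressing $F(s;\boldsymbol{k})$ as a difference of stuffle corrections (fusing $s$ plus its Ohno excess into each component of $\boldsymbol{k}^\dagger$ minus the same for $\boldsymbol{k}$) is the exact analogue of Lemma~\ref{Fm} and is sound.

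The gaps are in your third and fourth steps. The asserted pairwise cancellation of the ``fuse into a $2$'' contributions is not a consequence of Ohno's relation: the indices $(\{2\}^i,s+m_1+2,\{2\}^{m-1-i},1,\{2\}^{n+1})$ arising on the $\boldsymbol{k}$ side and the corresponding indices $(\{2\}^i,s+m_1+2,\{2\}^{n-1-i},3,\{2\}^{m})$ on the $\boldsymbol{k}^\dagger$ side are not dual to one another, and you supply no mechanism that matches them; in fact one should expect these terms to survive and contribute to the right-hand side rather than cancel. More seriously, the ``marked-component expansion'' of a single entry $c$ flanked by blocks of $2$'s into $\sum_{a+b=c}\mathcal{O}(\ldots,a,\{2\}^{\bullet},b,\ldots)$ is, as you correctly observe, essentially the content of the conjecture itself, so invoking it is circular. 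The fallback you sketch --- a recursion in $(s,m,n)$ modelled on the proof of Theorem~\ref{main2}, reducing to boundary cases handled by Lemma~\ref{Fm} and Theorem~\ref{main1} --- is a sensible research plan, but it is not carried out, and there is no evidence that the boundary cases alone suffice. Your write-up is honest about all of this; it is a well-posed attack on an open problem, not a proof.
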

\begin{conj}
 For integers $s\ge3$ and $n\ge0$, 
 \begin{align*}
  \mathcal{O}((s)\sha(\{3\}^{n}))
  &=\sum_{\substack{ i+j+k=n \\ i,j,k\ge0 \\ a_{0}+\cdots+a_{j}=s+3j \\ a_{0},\ldots,a_{j-1}\ge2, a_j\ge3 }}
   (-1)^{j} \mathcal{O}(\{1,2\}^{i},a_{0},\ldots,a_{j},\{1,2\}^{k})\\
  &\quad 
   +\sum_{\substack{ i+j+k=n-1 \\ i,j,k\ge0 \\ a_{0}+\cdots+a_{j}=s+3j \\ a_{0},\ldots,a_{j}\ge2 }}
   (-1)^{j} \mathcal{O}(\{1,2\}^{i},1,a_{0},\ldots,a_{j},2,\{1,2\}^{k}).
 \end{align*}
\end{conj}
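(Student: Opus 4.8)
\emph{Reformulation as an identity for $F$.}
The plan is to first recast the conjecture in the language of the map $F(s;\boldsymbol{k})=\mathcal{O}((s)\sha\boldsymbol{k})-\mathcal{O}((s)\sha\boldsymbol{k}^\dagger)$ from Section 3. Observe that $(\{3\}^{n})^\dagger=\{1,2\}^{n}$ under the duality of the first definition, so the conjecture involves the dual pair $\{3\}^{n}$ and $\{1,2\}^{n}$. I would separate the right-hand side into its $j=0$ and $j\ge1$ parts. The $j=0$ part consists of $\sum_{i+k=n}\mathcal{O}(\{1,2\}^{i},s,\{1,2\}^{k})$ from the first sum and $\sum_{i+k=n-1}\mathcal{O}(\{1,2\}^{i},1,s,2,\{1,2\}^{k})$ from the second; a short check of the $2n+1$ gaps of the word $(1,2,\dots,1,2)$ shows these reassemble into the full shuffle $\mathcal{O}((s)\sha\{1,2\}^{n})=\mathcal{O}((s)\sha(\{3\}^{n})^\dagger)$ (between-block gaps give the first family, inside-block gaps the second). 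Hence the conjecture is equivalent to the clean statement
\[
  F(s;\{3\}^{n})=\big[\text{the $j\ge1$ part of the right-hand side}\big].
\]
The base cases are immediate: $n=0$ is trivial, and $n=1$ reduces to $F(s;(3))=-\sum_{a+b=s+3,\ a\ge2,\ b\ge3}\mathcal{O}(a,b)$, which is exactly the content of Theorem \ref{main2} in the form $D_m(s,2)=0$.

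\emph{The harmonic-product engine.}
Next I would set up the master identity that powered Lemma \ref{Fm}, now for a general second argument. Expanding $\mathcal{O}(s)\,\mathcal{O}(\boldsymbol{l})$ by the harmonic product and separating the shuffle part from the contracted parts gives, as power series in $X$,
\[
  F(s;\boldsymbol{l})=A(\boldsymbol{l}^{\dagger})-A(\boldsymbol{l}),
  \qquad
  A(\boldsymbol{l}):=\mathcal{O}(s)\,\mathcal{O}(\boldsymbol{l})-\mathcal{O}((s)\sha\boldsymbol{l}),
\]
where the equality of the two products $\mathcal{O}(s)\mathcal{O}(\boldsymbol{l})=\mathcal{O}(s)\mathcal{O}(\boldsymbol{l}^{\dagger})$ comes from Ohno's relation (Theorem \ref{ohno}). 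A direct computation of the contracted terms, merging the free index $m_1$ of $\mathcal{O}(s)$ into the $i$-th Ohno shift, yields the explicit form
\[
  A(\boldsymbol{l})=\sum_{i=1}^{r}\ \sum_{M\ge0}X^{M}\sum_{|\boldsymbol{e}|=M}(e_i+1)\,
  \zeta\big((l_1,\dots,l_i+s,\dots,l_r)\oplus\boldsymbol{e}\big),
\]
the weight $(e_i+1)$ being the multi-component analogue of the factor $(m+1)$ in Lemma \ref{Fm}. Specializing $\boldsymbol{l}=\{3\}^{n}$, so $\boldsymbol{l}^{\dagger}=\{1,2\}^{n}$, reduces the conjecture to computing the difference $A(\{1,2\}^{n})-A(\{3\}^{n})$ of these weighted absorption sums.

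\emph{Induction on $n$.}
To evaluate the difference I would induct on $n$, peeling off one $\{3\}$-block (equivalently one $\{1,2\}$-block on the dual side) and re-expressing the un-absorbed tail by Ohno's relation so that $\{3\}$-tails are traded for $\{1,2\}$-tails. The structure of the target is highly suggestive: a central block $a_0,\dots,a_j$ with $a_0+\cdots+a_j=s+3j$ should record that the single $(s)$ has absorbed exactly $j$ of the threes (each contributing weight $3$), the sign $(-1)^{j}$ should arise from the inclusion--exclusion that converts iterated stuffle contractions into genuine shuffles, and the flanking $\{1,2\}^{i}$ and $\{1,2\}^{k}$ patterns should be the Ohno-dual images of the unabsorbed blocks to the left and right. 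The two families of the conjecture (central block ending in $a_j\ge3$ versus the sandwiched $1,a_0,\dots,a_j,2$ with all $a_p\ge2$) should correspond, as in the $j=0$ analysis above, to whether the outermost absorbed three sits at a block boundary or strictly inside a $\{1,2\}$-block.

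\emph{The main obstacle.}
The real difficulty is not any single deep relation but the combinatorial bookkeeping of the nested, $(e_i+1)$-weighted absorption sums: one must show that after repeatedly applying Ohno's relation to the tails, every intermediate term either cancels or lands in one of the two advertised families with precisely the right multiplicity, the correct sign $(-1)^{j}$, and the correct inequality constraints ($a_0,\dots,a_{j-1}\ge2,\ a_j\ge3$ in the first family, all $a_p\ge2$ in the second). The weights $(e_i+1)$, which in Lemma \ref{Fm} collapsed neatly because $\boldsymbol{l}$ had a single nontrivial component, now interact across $n$ positions, and controlling their redistribution after each Ohno reduction — proving in particular that no spurious terms of the wrong block shape survive — is the crux. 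This is exactly the step that resists a short argument and is the reason the statement is posed as a conjecture rather than a theorem.
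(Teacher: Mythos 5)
You have not proved the statement, and there is in fact no proof in the paper to compare against: this is one of the conjectures of Section 4, stated there on the basis of numerical computation only. Your preliminary reductions are correct and worth recording: $(\{3\}^{n})^{\dagger}=\{1,2\}^{n}$; the $j=0$ part of the right-hand side is precisely $\mathcal{O}((s)\sha\{1,2\}^{n})$ (the $n+1$ between-block insertions of $s$ give the first family and the $n$ inside-block insertions give the second), so the conjecture is equivalent to the assertion that $F(s;\{3\}^{n})$ equals the $j\ge1$ part; the identity $F(s;\boldsymbol{l})=A(\boldsymbol{l}^{\dagger})-A(\boldsymbol{l})$ together with your weighted-absorption formula for $A(\boldsymbol{l})$, with weights $(e_i+1)$, is a correct generalization of the computation underlying Lemma \ref{Fm}; and the case $n=1$ does follow from Theorem \ref{main2} combined with the closed form of $F(2;(s+1))$ derived inside its proof.

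However, the inductive step --- the only part with real mathematical content --- is never carried out. Everything from ``the structure of the target is highly suggestive'' onward is written in the conditional: the sign $(-1)^{j}$ ``should arise'', the flanking $\{1,2\}$-blocks ``should correspond'', and you close by conceding that controlling the redistribution of the weights $(e_i+1)$ under repeated applications of Ohno's relation ``resists a short argument''. That redistribution problem is not a technical footnote; it is exactly the statement being conjectured. For $j\ge1$ one must show that the nested stuffle contractions, after dualizing the unabsorbed tails, produce precisely the central blocks with $a_{0}+\cdots+a_{j}=s+3j$, with the stated inequality patterns ($a_{0},\ldots,a_{j-1}\ge2$, $a_{j}\ge3$ in one family, all $a_{p}\ge2$ in the other), each with multiplicity one and alternating sign, and that all other terms cancel --- and no mechanism for this cancellation is exhibited; even the case $n=2$, the first case where $j=2$ occurs and the two families genuinely interact, is not verified. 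So what you have is a plausible, well-grounded strategy with correct base cases, not a proof; the statement remains, as in the paper, open.
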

\begin{conj}
 For an integer $s\ge2$,  
 \begin{align*}
  &\mathcal{O}((s)\sha(1,1,3))-\mathcal{O}((s)\sha(1,4))\\
  &=-\sum_{\substack{ a+b=s+3 \\ a\ge1,b\ge3 }}
    \mathcal{O}(a,b,2)
   +\sum_{\substack{ a+b=s+4 \\ a\ge2,b\ge3 }}
    \mathcal{O}(a,1,b)+\sum_{\substack{ a+b=s+2 \\ a\ge2,b\ge2 }}
    \mathcal{O}(a,b,3).
 \end{align*}
\end{conj}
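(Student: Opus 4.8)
The plan is to follow the strategy of Section 3: work coefficient-by-coefficient in $X$ (i.e.\ prove the $\mathcal{O}_m$-version for every $m$) and reduce the shuffle products against the single letter $(s)$ to ordinary Ohno sums through the harmonic product formula. The first observation is the duality $(1,1,3)^\dagger=(1,4)$, so that the left-hand side is exactly $F(s;(1,1,3))=\mathcal{O}((s)\sha(1,1,3))-\mathcal{O}((s)\sha(1,1,3)^\dagger)$, the natural depth-three analogue of the quantity $F(s;(t+1))$ treated in Lemma \ref{Fm} and Theorem \ref{main2}.

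Mimicking the computation of Lemma \ref{Fm}, I would expand the products $\sum_{m_1+m_2=m}\mathcal{O}_{m_1}(s)\mathcal{O}_{m_2}(1,1,3)$ and $\sum_{m_1+m_2=m}\mathcal{O}_{m_1}(s)\mathcal{O}_{m_2}(1,4)$ by the depth-one harmonic product formula $\zeta(k)\zeta(\boldsymbol{l})=\zeta((k)\ast\boldsymbol{l})$. In each case the \emph{shuffle part} reassembles into $\mathcal{O}_m((s)\sha(1,1,3))$, resp.\ $\mathcal{O}_m((s)\sha(1,4))$, while the \emph{insertion part} (which adds $s+m_1$ to one component of $(1,1,3)\oplus\boldsymbol{e}$, resp.\ $(1,4)\oplus\boldsymbol{e}$) collapses, slot by slot, into $\sum_{m_1=0}^{m}\mathcal{O}_{m-m_1}$ of a single shifted index. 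Since $\mathcal{O}_{m_2}(1,1,3)=\mathcal{O}_{m_2}(1,4)$ by Ohno's relation the two products coincide, so subtracting the shuffle parts yields the explicit formula
\begin{align*}
 F_m(s;(1,1,3))
 &=\sum_{m_1=0}^{m}\bigl(\mathcal{O}_{m-m_1}(s+m_1+1,4)+\mathcal{O}_{m-m_1}(1,s+m_1+4)\bigr)\\
 &\quad-\sum_{m_1=0}^{m}\bigl(\mathcal{O}_{m-m_1}(s+m_1+1,1,3)+\mathcal{O}_{m-m_1}(1,s+m_1+1,3)+\mathcal{O}_{m-m_1}(1,1,s+m_1+3)\bigr),
\end{align*}
which expresses the left-hand side purely through depth-two and depth-three Ohno sums.

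The remaining and decisive step is to identify this combination with the conjectural right-hand side. Here I would first use Ohno's relation to normalise the depth-two contributions (replacing each $\mathcal{O}(s+m_1+1,4)$ and $\mathcal{O}(1,s+m_1+4)$ by the Ohno sum of its dual index), and then seek a recursion in $s$ in the spirit of the identity $D_{m-1}(s,t)=D_m(s-1,t)+D_m(s,t-1)$ of Section 3, aiming to reduce the full statement to the base case $s=2$, which at $X$-degree $0$ is precisely the weight-seven relation lying behind (\ref{eq3}). The difference of the two sides would then be shown to satisfy the same recursion and the same initial data, forcing it to vanish.

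I expect this last step to be the genuine obstacle. Once everything that Ohno's relation (and, where applicable, Theorems \ref{main1}--\ref{main2}) forces has been cancelled, honest depth-three Ohno sums survive on both sides, and matching them requires exactly the kind of ``missing'' relations among Ohno sums that this paper isolates but does not fully account for. This is consistent with the statement being offered only as a conjecture: a complete proof would presumably need an input strictly beyond Ohno's relation together with the double Ohno relation, and the plan above should be read as reducing the conjecture to that essential, still-open, identity among depth-three Ohno sums.
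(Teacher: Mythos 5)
The statement you are addressing is one of the conjectures of Section 4: the paper offers no proof of it, only numerical evidence, so there is no argument of the authors to compare yours against. Your proposal is, by its own admission, not a proof either, and your self-assessment is accurate. What you do establish is correct: $(1,1,3)^\dagger=(1,4)$, so the left-hand side is $F(s;(1,1,3))$, and the harmonic-product computation in the style of Lemma \ref{Fm} is sound. Expanding $\sum_{m_1+m_2=m}\mathcal{O}_{m_1}(s)\mathcal{O}_{m_2}(1,1,3)$ and $\sum_{m_1+m_2=m}\mathcal{O}_{m_1}(s)\mathcal{O}_{m_2}(1,4)$ via $(k)\ast\boldsymbol{l}=(k)\sha\boldsymbol{l}+\sum_i(\dots,l_i+k,\dots)$ and using $\mathcal{O}_{m_2}(1,1,3)=\mathcal{O}_{m_2}(1,4)$ does yield exactly the displayed formula for $F_m(s;(1,1,3))$ as a difference of insertion terms.

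The decisive step, however, is genuinely absent, and it is worth being precise about why it is hard: after summing over $m$, your formula expresses $F(s;(1,1,3))$ as an \emph{infinite} $X$-weighted sum $\sum_{m_1\ge0}X^{m_1}\bigl(\mathcal{O}(s+m_1+1,4)+\cdots-\mathcal{O}(s+m_1+1,1,3)-\cdots\bigr)$, whereas the conjectural right-hand side is a \emph{finite} sum of depth-three Ohno sums over lines $a+b=\mathrm{const}$. Bridging these is not a matter of normalising by Ohno's relation; it requires precisely the kind of relations beyond (\ref{ohno_eq}) and Theorems \ref{main1}--\ref{main2} that the paper identifies as missing. The proposed recursion in $s$ modelled on $D_{m-1}(s,t)=D_m(s-1,t)+D_m(s,t-1)$ is speculative: you neither derive such a recursion for the right-hand side nor verify a base case, and the analogous telescoping in Section 3 worked only because everything reduced to depth $\le2$, where Ohno's relation and the sum formula suffice. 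So the proposal should be read as a correct preliminary reduction plus an honest acknowledgment that the conjecture remains open, which matches the status the paper assigns to it.
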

\begin{conj}
 For an integer $s\ge2$,  
 \begin{align*}
  &\mathcal{O}((s)\sha(4,2))-\mathcal{O}((s)\sha(2,1,1,2))\\
  &=\sum_{\substack{ a+b=s+2 \\ a\ge1,b\ge1 }}
    \mathcal{O}(a,b,2,2)
   -\sum_{\substack{ a+b=s+2 \\ a\ge2,b\ge2 }}
    \mathcal{O}(a,2,b,2)\\
  &\quad -\sum_{\substack{ a+b=s+3 \\ a\ge2,b\ge2 }}
    \mathcal{O}(2,a,1,b)
   -\sum_{\substack{ a+b=s+3 \\ a\ge2,b\ge3 }}
    \mathcal{O}(a,b,1,2)
   +\sum_{\substack{ a+b=s+3 \\ a\ge3,b\ge2 }}
    \mathcal{O}(a,1,2,b)\\
   &\quad -\sum_{\substack{ a+b=s+4 \\ a\ge2,b\ge3 }}
    \mathcal{O}(a,b,2)+\mathcal{O}(3,2,s+1)-\mathcal{O}(s+1,2,3).
 \end{align*}
\end{conj}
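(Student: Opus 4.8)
The left-hand side is precisely $F(s;(4,2))$, since $(4,2)^\dagger=(2,1,1,2)$; so the conjecture is an explicit evaluation of $F(s;(4,2))$, one depth higher than Lemma~\ref{Fm}, which handles the depth-one case $\boldsymbol{k}=(t+1)$. My plan is to rerun the mechanism of Lemma~\ref{Fm} with the depth-two index $\boldsymbol{l}=(4,2)$ in place of a single letter. Starting from the special harmonic product used there, I would write
\[
 \mathcal{O}(s)\,\mathcal{O}(\boldsymbol{l})=\mathcal{O}((s)\sha\boldsymbol{l})+\sum_{i}\Phi_i(s;\boldsymbol{l}),\qquad
 \Phi_i(s;\boldsymbol{l})=\sum_{m_1\ge0}X^{m_1}\,\mathcal{O}(l_1,\dots,l_i+s+m_1,\dots,l_r),
\]
where $(s)$ is either shuffled into $\boldsymbol{l}$ or absorbed into its $i$-th component; the series $\Phi_i$ is simply the generating-function packaging of the $\mathcal{O}_{m_2}(\dots s+m_1\dots)$ terms of Lemma~\ref{Fm}. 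Applying Ohno's relation $\mathcal{O}(\boldsymbol{l})=\mathcal{O}(\boldsymbol{l}^\dagger)$ to equate $\mathcal{O}(s)\mathcal{O}(\boldsymbol{l})$ with $\mathcal{O}(s)\mathcal{O}(\boldsymbol{l}^\dagger)$ and cancelling the common product yields the master identity
\[
 F(s;(4,2))=\sum_{i=1}^{4}\Phi_i\bigl(s;(2,1,1,2)\bigr)-\sum_{i=1}^{2}\Phi_i\bigl(s;(4,2)\bigr).
\]

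I would then expand the six contraction series. The two coming from $(4,2)$ are the depth-two series with $s+m_1$ added to the first or the second slot; the four coming from $(2,1,1,2)$ are depth-four series with $s+m_1$ inserted into each of its four slots. Each is of the shape $\sum_{m_1}X^{m_1}\mathcal{O}(\dots,c+m_1,\dots)$, a single component carrying an extra Ohno weight. Comparing coefficients of $X^N$ turns every $\Phi_i$ into a finite sum over $m_1+m_2=N$ of ordinary Ohno sums $\mathcal{O}_{m_2}$, so the identity to be proved becomes, in each degree, a relation among multiple zeta values of weight $s+N+6$. To recast these in the shape of the statement, whose indices are constrained by $a+b=s+2,\,s+3,\,s+4$, I would reindex the floating parameter $s+m_1$ and reshape the resulting indices into those appearing on the right-hand side by repeated use of Ohno's relation together with its sum-formula specialization, exactly as the sum formula was used to close the computation in the proof of Theorem~\ref{main2}.

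The main obstacle is this last matching. Whereas the reduction to the master identity should be routine, the passage from the six contraction series to the stated right-hand side is a genuine multiple-zeta-value computation: it is not a mere rearrangement, since it requires applying Ohno's relation to intermediate combinations, and the delicate point is to track the endpoints of every reindexing. The asymmetric admissibility ranges (for example $a\ge3,b\ge2$ against $a\ge2,b\ge3$), the depth-three family $\mathcal{O}(a,b,2)$, and the two isolated correction terms $\mathcal{O}(3,2,s+1)-\mathcal{O}(s+1,2,3)$ are precisely the boundary contributions that these endpoint effects must produce. Verifying that, after all cancellations, exactly the stated combination survives is where I expect the real work to lie; the analogous but far simpler bookkeeping already accounts for the base case $F_m(s;(3))=F_m(2;(s+1))$ in the proof of Theorem~\ref{main2}.
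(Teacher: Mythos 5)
The statement you are addressing is one of the unproven conjectures of Section~4: the paper gives no proof of it, only numerical evidence, so your proposal has to stand entirely on its own. Your first step is correct and worth having: $(4,2)^\dagger=(2,1,1,2)$, so the left-hand side is $F(s;(4,2))$, and applying the harmonic product $\zeta(s+m_1)\zeta(\boldsymbol{l}\oplus\boldsymbol{e})=\zeta((s+m_1)\sha(\boldsymbol{l}\oplus\boldsymbol{e}))+\sum_i\zeta(\dots,l_i+e_i+s+m_1,\dots)$ to both $\boldsymbol{l}=(4,2)$ and $\boldsymbol{l}^\dagger=(2,1,1,2)$ and subtracting does yield your master identity
\[
F(s;(4,2))=\sum_{i=1}^{4}\Phi_i\bigl(s;(2,1,1,2)\bigr)-\sum_{i=1}^{2}\Phi_i\bigl(s;(4,2)\bigr),
\]
exactly as in the proof of Lemma~\ref{Fm}. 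But this is the routine half, and everything after it is asserted rather than done.

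The genuine gap is the second half, and it is not ``bookkeeping.'' Your six contraction series are of the form $\sum_{m_1+m_2=m}\mathcal{O}_{m_2}(\dots,l_i+s+m_1,\dots)$: they carry an extra grading by $m_1$, the amount absorbed into a single slot, entangled with the Ohno parameter $m_2$. The right-hand side of the conjecture instead consists of sums $\sum_{a+b=s+2}$, $\sum_{a+b=s+3}$, $\sum_{a+b=s+4}$ of Ohno sums $\mathcal{O}_m(\cdots)$ in which the constraint on $(a,b)$ is independent of $m$. Passing from the first shape to the second is precisely the step that, in the depth-one base case $D_m(s,2)=0$ of Theorem~\ref{main2}, required expanding everything into individual $\zeta$-values and exhibiting an exact lattice-point cancellation ($\{0\le a\le m_1\}\cup\{0\le b\le m_2\}\cup\{a>m_1,\,b>m_2\}$ tiling the full range); that cancellation was special to depth two and closed up using only Ohno's relation. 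For the depth-four series here there is no argument, and no reason offered, that the analogous rearrangement closes up using only Ohno's relation and the stuffle with a single zeta --- indeed, if it did, the authors would presumably not have left the statement as a conjecture, since Section~4 is explicitly the list of relations they could not prove. Until you exhibit the explicit identity matching your six $\Phi_i$'s with the eight terms of the right-hand side (including the asymmetric ranges and the two isolated terms $\mathcal{O}(3,2,s+1)-\mathcal{O}(s+1,2,3)$, which you correctly flag as boundary effects but never derive), you have only reduced one open identity to another open identity of comparable difficulty.
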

\begin{conj}
 For integers $s,t\ge2$ and $m\ge0$, 
 \[
  F(s;(\{2\}^{m})\sha(t+1))
  =F(t;(\{2\}^{m})\sha(s+1)).
 \]
\end{conj}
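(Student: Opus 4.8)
The plan is to treat this as the $m$-fold generalization of Theorem \ref{main2} (which is precisely the case $m=0$) and to mirror, step by step, the argument of Section 3. First I would pass to coefficients in $X$: extending the quantity $F_j(s;\boldsymbol{k})$ of Lemma \ref{Fm} (the coefficient of $X^j$) $\mathbb{Q}$-linearly to formal sums of indices, the conjecture becomes equivalent to
\[
 D_j^{(m)}(s,t):=F_j\bigl(s;(\{2\}^{m})\sha(t+1)\bigr)-F_j\bigl(t;(\{2\}^{m})\sha(s+1)\bigr)=0
\]
for all $j\ge0$, $s,t\ge2$, $m\ge0$. The case $m=0$ is Theorem \ref{main2}, which would serve as the base of an induction, and the antisymmetry $D_j^{(m)}(s,t)=-D_j^{(m)}(t,s)$ is immediate.

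The combinatorial starting point is the identity
\[
 (\{2\}^{m})\sha(t+1)=\sum_{i=0}^{m}(\{2\}^{i},t+1,\{2\}^{m-i}),
\]
together with its termwise dual $\sum_{i=0}^{m}(\{2\}^{m-i},\{1\}^{t-1},2,\{2\}^{i})$. Using these I would reproduce the derivation of Lemma \ref{Fm}: expand the harmonic product $\mathcal{O}_{m_1}(s)\,\mathcal{O}_{m_2}(\boldsymbol{l})$ via $\zeta(k)\zeta(\boldsymbol{l})=\zeta((k)\ast\boldsymbol{l})$ for each term $\boldsymbol{l}$ of $(\{2\}^{m})\sha(t+1)$ and of its dual. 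Since $\mathcal{O}(\boldsymbol{l})=\mathcal{O}(\boldsymbol{l}^{\dagger})$ by Ohno's relation (\ref{ohno_eq}), the products $\mathcal{O}(s)\mathcal{O}\bigl((\{2\}^{m})\sha(t+1)\bigr)$ for the index and its dual coincide, so their pure-shuffle parts cancel, and one is left with an explicit expression for $F_j\bigl(s;(\{2\}^{m})\sha(t+1)\bigr)$ as a signed sum of Ohno sums of indices obtained by merging $s+m_1$ into one component of each term and of each dual term. This is the $m$-analog of Lemma \ref{Fm}.

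From the explicit formula I would then seek a recursion in $(s,t)$ paralleling $D_{j-1}(s,t)=D_{j}(s-1,t)+D_{j}(s,t-1)$, which together with the antisymmetry would reduce the whole family to the boundary $t=2$. The base case $F_j\bigl(s;(\{2\}^{m})\sha(3)\bigr)=F_j\bigl(2;(\{2\}^{m})\sha(s+1)\bigr)$ would then be attacked directly, as in the closing computation of the proof of Theorem \ref{main2}, by collecting the merged indices and matching them through Ohno's relation.

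The hard part will be the boundary case and, before it, producing a clean recursion at all. For $m=0$ the merge terms collapse because $(t+1)$ has depth one and $(t+1)^{\dagger}=(\{1\}^{t-1},2)$ is a corner index; for $m\ge1$ the inserted $\{2\}$-blocks generate many more Ohno sums whose cancellations are no longer governed by single Ohno's relation alone. I expect these to require the \emph{double} Ohno relation (Theorem \ref{main1}), since indices of the form $(\{2\}^{i},\dots,\{2\}^{j})$ are exactly those controlled by the operator identity $\sigma_{m_1}\tau\sigma_{m_2}\tau=\tau\sigma_{m_2}\tau\sigma_{m_1}$ of Lemma \ref{lem} and by Lemma \ref{sakki}. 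A cleaner but riskier alternative is to recast the statement algebraically as $(\{2\}^{m})\sha U_{s,t}\in\bigcap_{j}\ker(\mathit{Z}\sigma_{j})$, where $U_{s,t}$ is the depth-one element placed in the kernel by Theorem \ref{main2}; the obstruction there is that the relevant shuffle is the harmonic (atomic-component) product $\sha$, for which $\sigma$ and $\tau$ do not intertwine as transparently as for the integral shuffle used in Section 2. Reconciling the harmonic-product bookkeeping of Section 3 with the operator formalism of Section 2 is, I believe, the central difficulty, and is presumably why the statement remains only conjectural.
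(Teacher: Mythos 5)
This statement is Conjecture 4.5 of the paper; the authors give no proof of it, so there is nothing to compare your attempt against on their side. On your side, what you have written is a strategy, not a proof, and you say so yourself: the three load-bearing steps --- (a) the $m$-analogue of Lemma \ref{Fm}, i.e.\ an explicit formula for $F_j\bigl(s;(\{2\}^{m})\sha(t+1)\bigr)$ after the shuffle-parts cancel, (b) a recursion in $(s,t)$ generalizing $D_{j-1}(s,t)=D_j(s-1,t)+D_j(s,t-1)$, and (c) the boundary case $t=2$ --- are all left as things you ``would'' do, with no evidence that any of them goes through. The preliminary observations you do make are correct ($(\{2\}^{m})\sha(t+1)=\sum_{i=0}^{m}(\{2\}^{i},t+1,\{2\}^{m-i})$, its termwise dual, the reduction to coefficients $D_j^{(m)}(s,t)=0$ with $m=0$ being Theorem \ref{main2}, and the antisymmetry), but these are the easy five percent of the problem.

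The concrete obstruction, which you correctly identify but do not overcome, is that for $m\ge1$ the harmonic product $(s+m_1)\ast\boldsymbol{l}$ applied to each term $\boldsymbol{l}$ of $(\{2\}^{m})\sha(t+1)$ and of its dual produces indices with $s+m_1$ merged into interior $2$'s and interior $1$'s of $(\{1\}^{t-1},2)$-blocks; in the $m=0$ case these merged terms could be matched pairwise between the $(s,t)$ and $(t,s)$ sides using only single Ohno duality (e.g.\ $\mathcal{O}_j(\{1\}^{t-2},2,s)=\mathcal{O}_j(\{1\}^{s-2},2,t)$), whereas for $m\ge1$ the residual terms are Ohno sums of indices like $(\{2\}^{i},a,\{2\}^{j},b,\{2\}^{k})$ whose required identifications are not instances of (\ref{ohno_eq}), nor obviously of the double Ohno relation (Theorem \ref{main1}) --- indeed they resemble the still-conjectural identities in Conjectures 4.1--4.4. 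Your fallback idea of recasting the claim via the operator identity of Lemma \ref{lem} runs into exactly the mismatch you name (harmonic versus integral shuffle), and you offer no way to resolve it. So the proposal should be assessed as a reasonable research plan for an open problem, not as a proof; it establishes nothing beyond the $m=0$ case already proved in the paper.
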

For example, 
for $m=1$,  
\[
 F(s;(t+1,2))+F(s;(2,t+1))
 =F(t;(s+1,2))+F(t;(2,s+1)), 
\]
and the case $m=0$ gives Theorem \ref{main2}.

%%%%%%%%%%%%%%%%%%%%%%%%%%%%%%%%%%%%%%%%%%%%%%%%%%%%%%%%%%%%%%%%%%%%%%%%%%%%%%%%%%%%%%%%%%%%%%%%%%%%
\section*{Acknowledgements}
This work was supported by JSPS KAKENHI Grant Numbers, JP18J00982, JP18K13392, JP19J00835, and JP19K14511.

\end{document}